\newcommand{\ignore}[1]{}
\newtheorem{theorem}{Theorem}
\newtheorem{lemma}{Lemma}
\theoremstyle{definition}
\newtheorem{definition}{Definition}
\theoremstyle{remark}
\newtheorem*{rep@theorem}{\rep@title}
\newcommand{\newreptheorem}[2]{%
\newenvironment{rep#1}[1]{%
 \def\rep@title{#2 \ref{##1}}%
 \begin{rep@theorem}}%
 {\end{rep@theorem}}}
\newcommand{\norm}[1]{\left\lVert{#1}\right\rVert}
\newcommand{\EE}[1]{\mathbb{E}\left[{#1}\right]}
\renewcommand{\O}[1]{\mathbf{O}\left({#1}\right)}
\newcommand{\Th}[1]{\mathbf{\Theta}\left({#1}\right)}
\def\R{\mathbb{R}}
\def\Z{\mathbb{Z}}
  \def\bhat{\hat{\beta}}
  \def\tb{\tilde{\beta}}
  \def\b{\beta}
  \def\hatbl{\hat{\beta}_{\lambda}}
  \def\hatbb{\hat{\beta}^B}
  \def\l{\lambda}
  \def\s{\sigma}
  \def\S{\Sigma}
  \def\smax{\sigma_{\text{max}}}
  \def\e{\epsilon}
  \def\bstar{\b^*}
  \def\U{\mathcal{U}}
\title{Fast-rate and optimistic-rate error bounds \\ for $\ell_1$-regularized regression}
\author{Rina Foygel and Nathan Srebro}
\begin{document}

\maketitle

\begin{abstract}
  We consider the prediction error of linear regression with $\ell_1$
  regularization when the number of covariates $p$ is large relative
  to the sample size $n$.  When the model is $k$-sparse and
  well-specified, and restricted isometry or similar conditions hold,
  the excess squared-error in prediction can be bounded on the order
  of $\frac{\sigma^2 k\log(p)}{n}$, where $\sigma^2$ is the noise
  variance.  Although these conditions are close to necessary for
  accurate {\em recovery} of the true coefficient vector, it is
  possible to guarantee good predictive accuracy under much milder
  conditions, avoiding the restricted isometry condition, but only
  ensuring an excess error bound of order
  $\frac{k\log(p)}{n}+\sigma\sqrt{\frac{k\log(p)}{n}}$.  Here we show
  that this is indeed the best bound possible (up to logarithmic
  factors) without introducing stronger assumptions similar to
  restricted isometry.
\end{abstract}

\section{Introduction}
We consider a random design linear regression problem with $p$
covariates:
\begin{equation*}
  y = x^T \beta^* + z
\end{equation*}
where $x \in \R^p$ are random covariates with covariance matrix
$\Sigma$, $z$ is random noise with $\EE{z^2}=\sigma^2$, and $\beta^*
\in \R^p$ are the regression coefficients.  For simplicity we take the
response to be normalized, $\EE{y^2}=1$ (otherwise all results scale
accordingly).

We consider the problem of minimizing the prediction error
\begin{equation*}
\EE{\left(y-x^T\beta\right)^2}
\end{equation*}
based on an i.i.d.~sample $\left(x^{(1)},y^{(1)}\right)\ldots,\left(x^{(n)},y^{(n)}\right)$ using
$\ell_1$-regularized regression:
\begin{equation*}
  \hatbb\doteq \arg\min_{\|\b\|_1\leq B}\sum_i \left(y^{(i)}-x^{(i)}{}^T\beta\right)^2\;.
\end{equation*}
Note that up to some unknown and data-dependent correspondence between
$B$ and $\lambda$, this is the same as
\begin{equation*}
  \hatbl\doteq \arg\min_{\b}\sum_i \left(y^{(i)}-x^{(i)}{}^T\beta\right)^2 +
  \lambda \norm{\b}_1\;,
\end{equation*}
also known as Lasso regression \citep{Lasso}.

Suppose that the covariates are $1$-bounded, and that $\max_i \left|y^{(i)}\right|\leq \O{\log(np)}$ (for instance, this is true with high probability in the Gaussian setting). Then, by \citet{SST}, with high probability over the sample, for any fixed $\beta^*$ with $\|\beta^*\|_1\leq B$, excess squared-error under $\ell_1$-regularized regression is bounded as
\begin{equation}\label{eq:OptimisticRate_ell1}\EE{\left(y-x^T\hatbb\right)^2}- \EE{\left(y-x^T{\beta}^*\right)^2} = \O{\frac{(1+B)^2\log(p)}{n/\log^3(n)}+\sqrt{\frac{(1+B)^2\log(p)}{n/\log^3(n)}\cdot \EE{\left(y-x^T{\beta}^*\right)^2}}}\;.\end{equation}

This result does not require any conditions on the correlation between the covariates, or on the nature of the ``noise'' $y-x^T\beta^*$, aside from the mild bound on $\max_i \left|y^{(i)}\right|$. In particular, this noise is not required to be independent from $x$. We believe also that this result would hold for subgaussian $x$'s (rather than our current stronger assumption that the $x$'s are $1$-bounded).

We can apply this result to the sparse regression setting, with some mild additional assumptions. Suppose that we are interested in comparing to a sparse predictor on an unknown support $J^*\subset [p]$, with $|J^*|\leq k$. We now place a lower-bound eigenvalue assumption on this support $J^*$ only:
\begin{equation}\label{eq:lambda1}\lambda_{\mathrm{min}}\left(\EE{x_{J^*}x_{J^*}^T}\right)\geq \lambda_1>0\;,\end{equation}
where $x_{J^*}=\left(x_j:j\in J^*\right)$ is the random vector consisting of those covariates $x_j$ for which $\bstar_j$ is nonzero.
This assumption is strictly weaker than the restricted isometry property (RIP) conditions in the compressed sensing literature, which require an upper-bound assumption as well, and require the eigenvalue bounds to hold for all sets $J\subset [p]$ of bounded size, in addition to the true support $J^*$.

We fix the scale of the problem by assuming $\EE{y^2}=1$. Now consider a predictor $\beta^*$ with support in $S^*$, which is better than the zero predictor --- that is, $\EE{(y-x^T\beta^*)^2}\leq \EE{(y-x^T\mathbf{0}_p)^2}=\EE{y^2}$=1. We now show that $\|\beta^*\|_1=\O{\sqrt{k\l_1^{-1}}}$. We first bound $\|\beta^*\|^2_2$, by observing that
\begin{align*}
&\|\beta^*\|^2_2\cdot \lambda_1\leq (\beta^*)^T \EE{xx^T}\beta^*
=\EE{\left(x^T\beta^*\right)^2}
=\EE{\left(y-x^T\beta^*\right)^2}-2\EE{y\cdot \left(y-x^T\beta^*\right)}+\EE{y^2}\\
&\leq 2\EE{\left(y-x^T\beta^*\right)^2}+2\EE{y^2}\leq 4\;.
\end{align*}
We then have
$$\|\beta^*\|_1\leq \sqrt{k}\|\beta^*\|_2\leq \sqrt{k\cdot 4\lambda_1^{-1}}=\O{\sqrt{k\l_1^{-1}}}\;.$$

Therefore, with high probability,
\begin{equation}\label{eq:OptimisticRate}
\EE{\left(y-x^T\hatbb\right)^2}- \EE{\left(y-x^T{\beta}^*\right)^2} = \O{\frac{k\log(p)}{\l_1 n/\log^3(n)}+\sqrt{\frac{k\log(p)}{\l_1 n/\log^3(n)}\cdot \EE{\left(y-x^T{\beta}^*\right)^2}}}\;,\end{equation}
under the assumption that the $x$'s are $1$-bounded and $\max_i \left|y^{(i)}\right|\leq \mathbf{O}(np)$. Therefore, to guarantee a bound of $\epsilon$ on the excess prediction error, the required sample complexity is
\begin{equation}\label{eq:OptimisticRate_sc}
n=\Th{\frac{k\log(p)}{\l_1\e}\cdot\frac{\s^2+\e}{\e}\cdot\log^3\left(\nicefrac{k}{\l_1\e}\right)}\;,
\end{equation}
where $\s^2=\EE{\left(y-x^T\bstar\right)^2}$ is the magnitude of the noise. This sample complexity follows an ``optimistic rate'': in the noisy setting, if we would like to ensure a bound $\e$ on excess error which is small relative to $\sigma^2$, then the required sample complexity is then $n= \Th{\e^{-2}}$, but on the the other hand, in the noiseless setting (i.e. when $y=x^T\beta^*$), or if the bound on excess error $\e$ is not much smaller than $\sigma^2$, then we require only $n= \Th{\e^{-1}}$. We emphasize that this result does not assume that the linear model is a true model or require independent noise.

In contrast, results on sparse vector recovery from the compressed sensing framework \citep{CandesTao,BRT,Koltchinskii, Cai} provide stronger guarantees in a similar setting, using either $\ell_1$-regularized regression or the Dantzig selector, given by
$$\bhat_{\lambda}^{DS}=\arg\min \max_i \left|y^{(i)}-x^{(i)}{}^T\beta\right|+\lambda\|\beta\|_1\;.$$
These stronger results require several additional specialized assumptions, including the requirement that the noise must be independent from the signal. Existing results are stated either in the deterministic or random covariates setting, but can in general be translated to a random Gaussian setting. We restrict our attention to $\ell_1$-regularized regression when the covariates are i.i.d.~multivariate Gaussian with zero mean: $x^{(i)}\stackrel{i.i.d.}{\sim}N(0,\Sigma)$. We now summarize this setting (with some simplifications), and compare it to the optimistic-rate results discussed above.
\begin{itemize}
\item {\bf Well-specified model with independent subgaussian noise:} Response $y^{(i)}$ is given by $y^{(i)}=x^{(i)}{}^T\beta^*+\sigma z^{(i)}$ for a true predictor $\beta^*$ satisfying $\|\beta^*\|_1\leq B$, and $z^{(i)}$ is a subgaussian or subexponential noise term with unit variance, and is independent from $x^{(i)}$.\\
The main additional requirement here is that noise $z$ is independent of $x$.  This in particular implies that $\bstar$ is the optimal regressor. Note that in order to obtain the optimistic-rate guarantee~(\ref{eq:OptimisticRate}), no such assumption is necessary, and $\bstar$ can be a non-optimal regressor chosen for its sparsity or eigenvalue properties.
\item {\bf Sparsity:} $\beta^*$ is $k$-sparse, meaning that it has (at most) $k$ non-zero entries.\\
To obtain the optimistic-rate guarantee as stated originally in~(\ref{eq:OptimisticRate_ell1}), we can relax this requirement and only assume that $\bstar$ has low $\ell_1$-norm.
\item {\bf Restricted eigenvalues:} There exists a $\kappa\doteq\kappa(k,3)>0$, such that for any $J\subset [p]$ with $|J|\leq k$, for any nonzero $\beta\in\R^p$ with $\|\beta_{\overline{J}}\|_1\leq 3\|\beta_J\|_1$,
\begin{equation}\label{eq:RE_kappa}\beta^T\Sigma\beta\geq \kappa\|\beta_J\|^2_2\;.\end{equation}
This restricted eigenvalue condition is implied by a stronger condition:\\
{\bf Restricted isometry:} Suppose that $\delta_{2k}+3\theta_{k,2k}<1$, where
$v^T\Sigma v\in \left(1\pm \delta_{2k}\right)\|v\|^2_2$ for all $2k$-sparse vectors $v$, and
$\left|v^T\Sigma w\right|\leq \theta_{k,2k}\|v\|_2\|w\|_2$ for all $k$-sparse $v$ and $2k$-sparse $w$ with disjoint supports. 
Then $\kappa\doteq \sqrt{1-\delta_{2k}}\left(1-\frac{3\theta_{k,2k}}{1-\delta_{2k}}\right)$ satisfies the restricted eigenvalue condition above.\\
To obtain the optimistic-rate guarantee~(\ref{eq:OptimisticRate}) under the sparsity assumption, we required an eigenvalue condition~(\ref{eq:lambda1}) on $\Sigma_{\mathrm{Support}\left(\bstar\right)}$ only, which is strictly weaker than the restricted eigenvalue and restricted isometry assumptions.
\end{itemize}

Under these assumptions, with $\kappa$ defined as in~(\ref{eq:RE_kappa}), the following guarantees hold with high probability, by Theorem 7.2 of \citet{BRT}:
\begin{align}
\notag&\text{Sparse and accurate estimation of $\beta^*$: }
\left\|\hatbb-\beta^*\right\|_1 =\O{\frac{\sigma k}{\kappa^2}\cdot \sqrt{\frac{\log(p)}{n}}}\;,\text{ and }\|\hatbb\|_0=\O{k}\;.\\
\label{eq:FastRate}&\text{Bounded excess prediction error: }\EE{\left(y-x^T\hatbb\right)^2}= \EE{\left(y-x^T\beta^*\right)^2}+\O{\frac{\sigma^2 k\log(p)}{\kappa^2n}}\;.
\end{align}

This corresponds to a sample complexity of
\begin{equation}\label{eq:FastRate_sc}
n=\Th{\frac{\s^2k\log(p)}{\kappa^2\e}}\;,\end{equation}
to ensure an excess error bound of $\e$. It is crucial to note that the error bound (and the sample complexity) scales with the magnitude of the noise, $\sigma^2$, rather than to the (unit) magnitude of the signal. In particular, in a noiseless setting, the results above guarantee a zero-error reconstruction of $\beta^*$, in contrast to the ``optimistic rate'' result~(\ref{eq:OptimisticRate}) where no such guarantee is given. Furthermore, in the noisy setting, the compressed sensing guarantees give a ``fast rate'' result, since the sample complexity scales with $\frac{1}{\e}$ rather than with $\frac{1}{\e^2}$.

In this compressed sensing framework, the guarantees on predictive error follow from a stronger guarantee on the accurate recovery of $\beta^*$, and in particular, the recovery of the true support of $\beta^*$. In order for this to be possible, it is of course necessary to be able to distinguish between pairs or small sets of covariates.  In particular, some sort of restricted isometry assumption is clearly necessary for bounding error in recovering $\beta^*$ (otherwise, the ``best'' $\beta^*$ might not be unique). However, if the goal is merely low error in prediction --- that is, we would like accuracy in calculating $x^T\beta^*$, rather than in recovering $\beta^*$ --- then perhaps this assumption could be weakened. For example, if a covariate is duplicated in the model, then it will not be possible to distinguish between the two when attempting to recover the true support; however, adding duplicated covariates to a model will have no effect on the problem of prediction.

 More generally, we are interested in whether the properties that are necessary for the (unique) recovery of $\beta^*$, are also necessary to obtain strong bounds on excess prediction error, and in the role of the assumptions that separate the ``optimistic rate'', unit-scale error bounds of \citet{SST} from the ``fast rate'' error bounds in the compressed sensing literature, which scale with the magnitude of the noise. Below, we show that, if we remove either the sparsity assumption (while still assuming that $\bstar$ has low $\ell_1$-norm) or the restricted isometry assumption from the compressed sensing framework described above, then up to logarithmic factors, the ``optimistic rate'' bound on excess prediction error, given in~(\ref{eq:OptimisticRate}), is the best possible bound. In particular, this implies that, even in the noiseless setting, we cannot achieve zero error in prediction, without stronger assumptions.

\section{Results}

First, we ask whether we can relax the assumption of a sparse true coefficient vector to an assumption on its $\ell_1$-norm, but still guarantee a fast-rate bound on excess error. Specifically, we consider the question of bounding excess prediction error, in the well-specified Gaussian setting where the restricted eigenvalue assumption holds, assuming only an $\ell_1$-norm bound on the true vector of coefficients.

Our first result shows that, up to logarithmic factors, the optimistic-rate error bound~(\ref{eq:OptimisticRate}) is the best possible rate under these conditions. For simplicity, we will consider the case of completely independent covariates, $x\sim N(0,\mathbf{I}_p)$. In particular, this ensures that the restricted eigenvalue assumption is satisfied. To place the problem on a unit scale (or rather, to bound the scale away from zero and away from infinity), we consider only true coefficient vectors $\beta^*$ satisfying
$$\frac{1}{2}\leq \EE{\left(x^T\bstar\right)^2}^{\nicefrac{1}{2}}=\|\bstar\|_2\leq \|\bstar\|_1\leq 1\;.$$

\begin{theorem}\label{sparsity_thm} Fix any $n\geq 30$, $p\geq 3n$, and $\sigma\geq 0$. Then there exists a $\beta^*\in\R^p$ with $\frac{1}{2}\leq\|\bstar\|_2\leq\|\beta^*\|_1\leq 1$, such that for any sample, for all $B\geq 0$,
\begin{equation*}
\EE{\left(y-x^T\hatbb\right)^2}\geq \sigma^2+ \frac{1}{32n\log^2(3n)}\;.
\end{equation*}
Additionally, if $100\leq \nicefrac{\sqrt{n}}{\s}\leq p$, then with probability at least $\frac{1}{2}$ over the sample, for all $B\geq 0$,
\begin{equation*}
\EE{\left(y-x^T\hatbb\right)^2}\geq \sigma^2+ \frac{\s}{102400\sqrt{n}\log^2\left(\max\left\{3n,\left\lceil{\nicefrac{\sqrt{n}}{\sigma}}\right\rceil\right\}\right)}\;.
\end{equation*}
Here $\hatbb=\arg\min_{\|\beta\|_1\leq B}\sum_i \left(y^{(i)}-x^{(i)}{}^T\beta\right)^2$, where $\left(x^{(i)},y^{(i)}\right)$ are i.i.d.~samples from the multivariate Gaussian distribution defined by drawing $x^{(i)}\sim N(0,\mathbf{I}_p)$ and $y^{(i)}\sim N\left(x^{(i)}{}^T\bstar,\s^2\right)$.  The expectations are taken over a new sample $(x,y)$ drawn from the same distribution, independently of the training set $\left\{\left(x^{(1)},y^{(1)}\right),\dots,\left(x^{(n)},y^{(n)}\right)\right\}$. (For each $B\geq 0$, if $\hatbb$ is not unique, then we show that the inequalities hold for some choice of $\hatbb$.)
\end{theorem}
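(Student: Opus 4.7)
The plan is to exhibit a specific non-sparse ``hard'' vector $\bstar\in\R^p$ of the form
\[
\bstar \;=\; \tfrac{1}{2} e_1 \;+\; \tfrac{1}{2m}\sum_{j=2}^{m+1} e_j,
\]
for a carefully chosen integer $m = m(n,\s)$, and to lower bound the excess prediction error $\|\hatbb - \bstar\|_2^2$ (which equals the excess risk because $\Sigma = \mathbf{I}_p$) uniformly in $B \ge 0$. A direct check yields $\|\bstar\|_1 = 1$ and $\|\bstar\|_2^2 = \tfrac14 + \tfrac1{4m} \in [\tfrac14,1]$, so the scale hypotheses $\tfrac12 \le \|\bstar\|_2 \le \|\bstar\|_1 \le 1$ are met whenever $m + 1 \le p$, which the given $p$ allows.

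The backbone of the argument is a \emph{sparsity reduction}: for every $B \ge 0$, among the empirical-risk minimizers of $\sum_i(y^{(i)} - x^{(i)T}\b)^2$ over $\|\b\|_1 \le B$, one can always select a minimizer $\hatbb$ with $|\mathrm{supp}(\hatbb)| \le n$. Indeed, if $|\mathrm{supp}(\hatbb)| > n$, the columns of the design matrix on $\mathrm{supp}(\hatbb)$ are linearly dependent, so there is a nonzero $v$ in the null space of the design with $\mathrm{supp}(v) \subseteq \mathrm{supp}(\hatbb)$. Moving along $\hatbb + tv$ leaves the training loss unchanged, and since $t \mapsto \|\hatbb + tv\|_1$ is piecewise linear, a suitable breakpoint zeros out a coordinate while respecting the $\ell_1$-constraint. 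Iterating gives the claim, and the theorem explicitly permits selecting this minimizer.

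For the first, sample-uniform bound, take $m = 2n$ so $m + 1 \le 3n \le p$. Split on $B$: if $B \le \tfrac14$, the triangle inequality gives $\|\hatbb - \bstar\|_2 \ge \|\bstar\|_2 - \|\hatbb\|_1 \ge \tfrac14$, so $\|\hatbb - \bstar\|_2^2 \ge \tfrac{1}{16}$; otherwise, the sparsity of $\hatbb$ forces at least $(m+1) - n = n + 1$ entries of $\bstar$ to be missed, so either the big entry is missed (contributing $\tfrac14$) or at least $n$ small entries of value $\tfrac{1}{4n}$ each are missed (contributing $n \cdot (\tfrac{1}{4n})^2 = \tfrac{1}{16n}$). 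In every case $\|\hatbb - \bstar\|_2^2 \ge \tfrac{1}{16n} \ge \tfrac{1}{32 n \log^2(3n)}$.

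The noisy bound is the main obstacle, because once $\sigma \gtrsim 1/\sqrt n$ the target $\sigma/\sqrt n$ exceeds the $1/n$ produced by the sparsity reduction alone. Here one refines $m$ to $\lceil \sqrt n/\sigma\rceil$ (and, to obtain a single $\bstar$ serving both bounds, interpolates between $m = 2n$ and $m = \lceil\sqrt n/\sigma\rceil$ depending on $\sigma$), so that each small coordinate sits at the noise floor $\sigma/\sqrt n$; the hypotheses $100 \le \sqrt n/\sigma \le p$ give $50 \le m + 1 \le p$. I would again split on $B$: small $B$ is handled by the triangle inequality; for moderate $B < 1$, the best $\ell_1$-constrained population predictor is a soft-thresholded $\bstar$ with excess risk $\asymp 1/(4(m+1)) = \Theta(\sigma/\sqrt n)$, and an empirical-versus-population concentration argument shows the Lasso must incur a constant fraction of this bias; for $B \ge 1$, a local-minimax (Le Cam or Fano) argument over the packing $\{\tfrac12 e_1 + \tfrac1{2m}\sum_{j\in I}e_j : |I|=m\}$ shows that no minimizer of the empirical loss can drive the squared $\ell_2$-error below $\sigma^2(m+1)/n = \Theta(\sigma/\sqrt n)$, from which a probability-$\ge \tfrac12$ sample event extracts a single hard $\bstar$ for which the bound holds deterministically. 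The $\log^2(\max\{3n,\lceil\sqrt n/\sigma\rceil\})$ factors arise from union bounds needed to make the construction simultaneously valid for all $B \ge 0$ and robust to the Gaussian fluctuations of $X^T z$ across the $p$ coordinates.
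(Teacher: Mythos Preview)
Your treatment of the first, sample-uniform bound is correct and in fact cleaner than the paper's: with $m=2n$ uniform small coordinates, the sparsity reduction to $\|\hatbb\|_0\le n$ immediately forces at least $n$ small coordinates to be missed, giving $\|\hatbb-\bstar\|_2^2\ge 1/(16n)$, which beats the stated $1/(32n\log^2(3n))$. The paper instead takes $\bstar_j=\tfrac{1}{4j\log p}$ plus one large coordinate, so the same counting argument only yields $\sum_{j>n}1/(16j^2\log^2 p)\asymp 1/(n\log^2 p)$; the reason for this apparently wasteful choice is that the \emph{same} harmonically-decaying $\bstar$ serves the noisy bound as well, whereas your construction requires picking a $\sigma$-dependent $m$.

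The genuine gap is in your noisy bound. A Le Cam/Fano packing argument lower-bounds $\sup_{\bstar\in\mathcal{F}}\EE\|\hat\b-\bstar\|^2$ for an arbitrary estimator; it does not give you a \emph{fixed} $\bstar$, chosen before the sample, on which the Lasso fails with probability $\ge 1/2$ simultaneously for all $B$. Your proposed extraction (``a probability-$\ge\tfrac12$ sample event extracts a single hard $\bstar$'') reverses the order of quantifiers. Worse, once $m=\lceil\sqrt n/\sigma\rceil\ll n$, the sparsity reduction $\|\hatbb\|_0\le n$ places no constraint on the support of $\hatbb$ at all, so neither the counting argument nor any bias-of-soft-thresholding argument explains why the Lasso, on the fixed $\bstar$, cannot simply land near $\bstar$ when $B\ge 1$.

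The paper's mechanism is entirely different and is the missing idea: it exploits the KKT (Lagrange) conditions for a Pareto-optimal $\hatbb$. If $\hatbb_j>0$ for all $j$ in some set $J_1$ (and the paper first shows that a small $J_1$ already forces large error, using the harmonic tail), then optimality forces $X_{J_1}^T(y-X\hatbb)=C\cdot\mathbf{1}_{J_1}$ for some scalar $C$, i.e., the residual gradient is \emph{constant} across $J_1$. Rewriting $y=X\bstar+\sigma z$ gives $X_{J_1}^TX(\hatbb-\bstar)=\sigma X_{J_1}^Tz - C\mathbf{1}_{J_1}$. Two Gaussian concentration lemmas then do the work: one shows $\|\mathrm{Proj}_{\mathbf{1}_{J_1}}^\perp X_{J_1}^Tz\|_2^2\gtrsim |J_1|\cdot n$ with high probability (the noise gradient is \emph{not} close to constant across $J_1$), and another shows $\|X_{J_1}^TX(\hatbb-\bstar)\|_2\lesssim n\log(p)\|\hatbb-\bstar\|_2$. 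Combining, $\|\hatbb-\bstar\|_2^2\gtrsim \sigma^2|J_1|/(n\log^2 p)\asymp \sigma/(\sqrt n\log^2 p)$. This KKT-vs-noise tension is what your proposal lacks, and no minimax reduction will supply it.
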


Next, we ask whether the restricted eigenvalue (or restricted isometry) assumption is necessary for a fast-rate bound on excess error, in the well-specified Gaussian setting where the sparsity assumption holds. 

Our second result shows that, up to logarithmic factors, the optimistic-rate error bound~(\ref{eq:OptimisticRate}) is the best possible rate under these conditions. For simplicity, we restrict our attention to $2$-sparse true coefficient vectors. We also only consider covariance matrices $\Sigma$ such that $\Sigma_{J^*}=\mathbf{I}_{J^*}$, where $J^*=\mathrm{Support}\left(\bstar\right)$. That is, ensuring the restricted isometry property on the true support only, is not sufficient for a fast-rate bound on excess error.

To avoid issues of scaling, we restrict our attention to covariance matrices $\Sigma$ with $\|\Sigma\|_{sp}\leq 2$, and to true coefficient vectors $\beta^*$ satisfying
$$\frac{1}{2}\leq \EE{\left(x^T\bstar\right)^2}^{\nicefrac{1}{2}}=\sqrt{\bstar{}^T\Sigma\bstar}= \|\bstar\|_2\leq\|\bstar\|_1\leq 1\;,$$
where we make use of the fact that $\Sigma_{J^*}=\mathbf{I}_{J^*}$ to obtain the second equality.

\begin{theorem}\label{RIP_thm} Fix any $n\geq 30$, $p\geq 3n$, and $\sigma\geq 0$. Then there exists a $2$-sparse $\beta^*\in\R^p$ with $\frac{1}{2}\leq\|\bstar\|_2\leq\|\beta^*\|_1\leq 1$, and a positive semi-definite $\Sigma\in\R^{p\times p}$ with $\|\Sigma\|_{sp}\leq 2$ and $\Sigma_{\mathrm{Support}\left(\bstar\right)}=\mathbf{I}_{\mathrm{Support}\left(\bstar\right)}$, such that for any sample, for all $B\geq 0$,
\begin{equation*}
\EE{\left(y-x^T\hatbb\right)^2}\geq \sigma^2+ \frac{1}{288n\log^2(3n)}\;.
\end{equation*}
Additionally, if $100\leq \nicefrac{\sqrt{n}}{\s}\leq p-3$, then with probability at least $\frac{1}{2}$ over the sample, for all $B\geq 0$,
\begin{equation*}
\EE{\left(y-x^T\hatbb\right)^2}\geq \sigma^2+ \frac{\s}{409600\sqrt{n}\log^2\left(\max\left\{3n,\left\lceil{\nicefrac{\sqrt{n}}{\sigma}}\right\rceil\right\}\right)}\;.
\end{equation*}
Here $\hatbb=\arg\min_{\|\beta\|_1\leq B}\sum_i \left(y^{(i)}-x^{(i)}{}^T\beta\right)^2$, where $\left(x^{(i)},y^{(i)}\right)$ are i.i.d.~samples from the multivariate Gaussian distribution defined by drawing $x^{(i)}\sim N(0,\Sigma)$ and $y^{(i)}\sim N\left(x^{(i)}{}^T\bstar,\s^2\right)$. The expectations are taken over a new sample $(x,y)$ drawn from the same distribution, independently of the training set $\left\{\left(x^{(1)},y^{(1)}\right),\dots,\left(x^{(n)},y^{(n)}\right)\right\}$. (For each $B\geq 0$, if $\hatbb$ is not unique, then we show that the inequalities hold for some choice of $\hatbb$.)
\end{theorem}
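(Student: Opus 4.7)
The plan is to reduce Theorem~\ref{RIP_thm} to Theorem~\ref{sparsity_thm} via an embedding that reinterprets the ``dense'' hard instance of Theorem~\ref{sparsity_thm} as a $1$-sparse (hence $2$-sparse) instance with a correlated covariance $\Sigma$ satisfying $\|\Sigma\|_{sp}\leq 2$ and $\Sigma_{\mathrm{Support}(\bstar)}=\mathbf{I}_{\mathrm{Support}(\bstar)}$.

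\textbf{Construction.} Apply Theorem~\ref{sparsity_thm} in dimension $p'=p-2$ to obtain a vector $\bstar^{(1)}\in\R^{p-2}$ satisfying $\nicefrac{1}{2}\leq\|\bstar^{(1)}\|_2\leq\|\bstar^{(1)}\|_1\leq 1$ and the associated lasso excess-risk lower bound. Let $u:=\bstar^{(1)}/\|\bstar^{(1)}\|_2$, a unit vector with $\|u\|_1\leq 2$. Take $\bstar:=\|\bstar^{(1)}\|_2\cdot e_1\in\R^p$: this is $1$-sparse with $\mathrm{Support}(\bstar)=\{1\}$, and satisfies $\nicefrac{1}{2}\leq\|\bstar\|_1=\|\bstar\|_2\leq 1$. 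Define $\Sigma$ by the equivalent sampling rule: draw $\tilde{x}\sim N(0,\mathbf{I}_{p-2})$ and $x_2\sim N(0,1)$ independently, then set $x_1:=u^T\tilde{x}$ and $x_{2+j}:=\tilde{x}_j$ for $j=1,\ldots,p-2$. Since $\|u\|_2=1$, $\mathrm{Var}(x_1)=1$, so $\Sigma_{\mathrm{Support}(\bstar)}=[1]=\mathbf{I}_{\mathrm{Support}(\bstar)}$. An eigenvalue computation on the block indexed by $\{1,3,\ldots,p\}$ gives eigenvalues $1+\|u\|_2^2=2$, $1$ with multiplicity $p-3$, and $1-\|u\|_2^2=0$; combined with the eigenvalue $1$ from $x_2$, this gives $\|\Sigma\|_{sp}=2$. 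Under this sampling, $y=\bstar_1 x_1+\sigma z=(\bstar^{(1)})^T\tilde{x}+\sigma z$, so $(\tilde{x},y)$ is distributed exactly as in the Theorem~\ref{sparsity_thm} setting with $\bstar^{(1)}$ in dimension $p-2$.

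\textbf{Reduction.} For any $B\geq 0$ and corresponding lasso output $\hatbb$, define the reduced predictor $\gamma:=\hatbb_1 u+(\hatbb_3,\ldots,\hatbb_p)^T\in\R^{p-2}$. Since $x^T\hatbb=\gamma^T\tilde{x}+\hatbb_2 x_2$ and $x_2$ is independent of $(\tilde{x},y)$, the test error decomposes as
\begin{equation*}
\EE{(y-x^T\hatbb)^2}=\sigma^2+\hatbb_2^2+\|\bstar^{(1)}-\gamma\|_2^2\geq \sigma^2+\|\bstar^{(1)}-\gamma\|_2^2,
\end{equation*}
so the task reduces to lower-bounding $\|\bstar^{(1)}-\gamma\|_2^2$ by invoking Theorem~\ref{sparsity_thm} in the $\tilde{x}$-space problem.

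\textbf{Main obstacle.} The subtlety is that $\gamma$ is not literally the $\ell_1$-constrained lasso in $\tilde{x}$-space. Rewriting the $x$-space minimization in the coordinates $(\gamma,\hatbb_2)$ produces the constraint $f(\gamma)+|\hatbb_2|\leq B$, where $f(\gamma):=\min_{b\in\R}(|b|+\|\gamma-bu\|_1)$ is a seminorm strictly smaller than $\|\cdot\|_1$ when $\gamma$ is aligned with $u$. Fortunately the triangle inequality yields the sandwich $f(\gamma)\leq\|\gamma\|_1\leq\|u\|_1\cdot f(\gamma)\leq 2f(\gamma)$, so the $f$-ball of radius $B$ is contained in the $\ell_1$-ball of radius $2B$, making $\gamma$ feasible for the $\ell_1$-constrained lasso in $\tilde{x}$-space at constraint-value $2B$. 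Applying Theorem~\ref{sparsity_thm}'s uniform-in-$B$ conclusion then yields the desired lower bound on $\|\bstar^{(1)}-\gamma\|_2^2$, and the factor-of-$2$ sandwich together with a minor dimension adjustment (between $p\geq 3n$ and $p-2\geq 3n$) accounts for the constant gap between $1/(32n\log^2(3n))$ of Theorem~\ref{sparsity_thm} and $1/(288n\log^2(3n))$ of Theorem~\ref{RIP_thm}. The hard part will be verifying that Theorem~\ref{sparsity_thm}'s bound transports from the exact lasso minimizer to an arbitrary $\ell_1$-bounded competitor such as $\gamma$; this likely requires inspecting the proof of Theorem~\ref{sparsity_thm} to confirm that its argument yields the lower bound for every predictor in a suitable $\ell_1$-ball, not only for the specific lasso minimizer. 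The probabilistic statement of Theorem~\ref{RIP_thm} then follows from the analogous probabilistic claim in Theorem~\ref{sparsity_thm} by the same reduction.
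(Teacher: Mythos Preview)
The reduction has a genuine gap at exactly the point you flag as ``the hard part,'' and it cannot be closed by inspecting the proof of Theorem~\ref{sparsity_thm}. That lower bound does \emph{not} hold for every $\ell_1$-bounded predictor: taking $\gamma=\bstar^{(1)}$ itself (which has $\|\gamma\|_1\leq 1$) gives zero excess risk. The proof of Theorem~\ref{sparsity_thm} uses two structural properties of the lasso minimizer that your $\gamma$ need not have. The first bound relies on $\|\bhat\|_0\leq n$ to guarantee at least $p-n$ coordinates with $\bhat_j=0$; but your $\gamma=\bhat_1 u+(\bhat_3,\dots,\bhat_p)$ is fully dense whenever $\bhat_1\neq 0$, since $u$ is dense. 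The second bound relies on Pareto-optimality through the KKT condition $X_{J_1}^T(y-X\bhat)=C\mathbf{1}_{J_1}$, and there is no reason your $\gamma$ should satisfy any such stationarity condition in the $\tilde{x}$-problem. So neither argument transfers.

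More fundamentally, the embedding is inverted relative to what is needed. By setting $x_1=u^T\tilde{x}$ and $\bstar=\|\bstar^{(1)}\|_2\,e_1$, you have made the sparse $\bstar$ the \emph{most} $\ell_1$-efficient exact predictor: $\|\bstar\|_1=\|\bstar^{(1)}\|_2\leq\|\bstar^{(1)}\|_1$, so among zero-training-error solutions the $1$-sparse one has the smallest $\ell_1$ norm (at least along the obvious null-space direction). In the noiseless case $\bstar$ is then itself a Pareto-optimal $1$-sparse lasso minimizer with zero excess risk, and your reduction applied to $\hatbb=\bstar$ yields $\gamma=\bstar^{(1)}$ and a vacuous bound. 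The paper's construction does precisely the opposite: it introduces an auxiliary covariate $v$ so that a \emph{dense} vector $\bstar_{dense}$ satisfies $X\bstar_{dense}=X\bstar_{sparse}$ with $\|\bstar_{dense}\|_1<\|\bstar_{sparse}\|_1$. This forces every Pareto-optimal lasso solution to place weight on $v$, and once $\bhat_v$ is bounded away from zero the Theorem~\ref{sparsity_thm}-style sparsity and KKT arguments can be run directly in the original coordinates, without any change-of-variables.
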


In particular, Theorem~\ref{RIP_thm} shows that without placing any assumptions on the covariates outside of $\mathrm{Support}\left(\bstar\right)$, we cannot guarantee a bound on excess error that is better than the optimistic rate obtained by~\citet{SST} from concentration bounds, up to logarithmic factors.

\section{Proofs}

We begin by defining a class of predictors that are optimal with respect to the squared-error loss and the $\ell_1$-norm regularizer:

\begin{definition} Given $y^{(i)}\in\R$ and $x^{(i)}\in\R^p$ for $i=1,\dots,n$, a predictor $\tb\in\R^p$ is Pareto-optimal (with respect to empirical squared-error and $\ell_1$-norm) if it satisfies
$$\sum_i \left(y^{(i)}-x^{(i)}{}^T\b\right)^2\leq \sum_i \left(y^{(i)}-x^{(i)}{}^T\tb\right)^2 \ \Rightarrow \ \|\b\|_1\geq \|\tb\|_1\;,$$
that is, if we cannot improve its empirical squared error without increasing its $\ell_1$-norm, and vice versa.
\end{definition}

The following lemma states a well-known property of $\ell_1$-regularized regression; we include a proof for completeness.
\begin{lemma}\label{lem:Pareto}
For any $y^{(1)},\dots,y^{(n)}\in\R$ and $x^{(1)},\dots,x^{(n)}\in \R^p$, for any $B\geq 0$, the class
$$\mathcal{B}_B\doteq \arg\min_{\|\b\|_1\leq B}\sum_i \left(y^{(i)}-x^{(i)}{}^T\b\right)^2$$
 must contain a predictor $\bhat^B$ that is Pareto-optimal and satisfies $\|\bhat^B\|_0\leq n$. \end{lemma}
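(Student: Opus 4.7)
The plan is to split the lemma into two sub-claims and prove them in order: (a) $\mathcal{B}_B$ contains a Pareto-optimal element, and (b) one such element has at most $n$ nonzero entries. The heart of the argument is a standard null-space perturbation along $\ker X$ that zeroes out an entry of a candidate solution without altering either its empirical loss or its $\ell_1$ norm.

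For (a), I would note that $\mathcal{B}_B$ is the argmin of a continuous convex function over the compact set $\{\beta:\|\beta\|_1\le B\}$, so it is itself nonempty and compact. Hence $\|\cdot\|_1$ attains its minimum on $\mathcal{B}_B$; let $\hat\beta^B$ be any such minimizer. Pareto-optimality follows by a direct check: any competitor $\beta$ with loss no greater than $\hat\beta^B$'s is either feasible, in which case $\beta\in\mathcal{B}_B$ and $\|\beta\|_1\ge\|\hat\beta^B\|_1$ by the choice of $\hat\beta^B$, or infeasible, forcing $\|\beta\|_1>B\ge\|\hat\beta^B\|_1$.

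For (b), I would additionally require $\hat\beta^B$ to minimize $\|\cdot\|_0$ among the Pareto-optimal elements of $\mathcal{B}_B$ (well-defined since $\|\cdot\|_0$ is integer-valued). Suppose for contradiction $\|\hat\beta^B\|_0>n$, and set $S:=\mathrm{Support}(\hat\beta^B)$. The $|S|>n$ design columns $(x^{(i)}_j)_{i\le n}$, $j\in S$, are then linearly dependent in $\R^n$, yielding $v\in\R^p$ with $\mathrm{supp}(v)\subseteq S$, $v\ne 0$, and $x^{(i)}{}^Tv=0$ for every $i$. Along the ray $\hat\beta^B+tv$ the empirical loss is constant, while $f(t):=\|\hat\beta^B+tv\|_1$ is convex and piecewise linear; on the interval around $0$ where no entry has changed sign, $f(t)=\|\hat\beta^B\|_1+t\sum_{j\in S}\mathrm{sign}(\hat\beta^B_j)v_j$. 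The minimality of $\|\hat\beta^B\|_1$ on $\mathcal{B}_B$ forces this slope to vanish, so $f$ is constant on the interval, and at its positive endpoint $t^+$ some entry of $\hat\beta^B+tv$ hits zero. The resulting point $\hat\beta^B+t^+v$ lies in $\mathcal{B}_B$, remains Pareto-optimal (since loss and $\ell_1$ norm are preserved), and has strictly smaller support, contradicting the minimality of $\|\hat\beta^B\|_0$.

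The only subtle step is the local $\ell_1$-geometry at an $\ell_1$-minimum in a null-space direction: one must argue that a nonvanishing slope $\sum_{j\in S}\mathrm{sign}(\hat\beta^B_j)v_j$ would let a small $t$ of the appropriate sign push $\|\cdot\|_1$ strictly below $\|\hat\beta^B\|_1$ while keeping feasibility (using that signs of nonzero entries are stable under small perturbations), and, conversely, that vanishing of this slope together with $v\ne 0$ forces some $j\in S$ with $\mathrm{sign}(v_j)=-\mathrm{sign}(\hat\beta^B_j)$, so that the support-reducing endpoint $t^+$ is finite. Once those first-order observations are in place, the rest is just dimension counting and the compactness argument for existence.
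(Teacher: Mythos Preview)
Your proof is correct. For part~(a) you follow essentially the same route as the paper: compactness of the $\ell_1$-ball yields a minimizer of $\|\cdot\|_1$ over $\mathcal{B}_B$, and one checks directly that it is Pareto-optimal. For part~(b) the approaches diverge: the paper simply invokes Theorem~3 of \citet{Rosset}, which guarantees that any $\tilde\beta$ can be replaced by some $\hat\beta^B$ with $X\hat\beta^B=X\tilde\beta$, $\|\hat\beta^B\|_1\le\|\tilde\beta\|_1$, and $\|\hat\beta^B\|_0\le n$; you instead give a self-contained null-space perturbation argument (minimize $\|\cdot\|_0$ among $\ell_1$-minimizers, then slide along $\ker X$ until an entry vanishes). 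Your argument is precisely the Carath\'eodory-style reduction that underlies the cited result, so the two proofs are really the same in substance; the paper's version is shorter because it outsources this step, while yours is elementary and avoids the external reference. Your handling of the ``subtle step'' is fine: zero slope plus $v\neq 0$ forces terms of both signs in $\sum_{j\in S}\mathrm{sign}(\hat\beta^B_j)v_j$, so the support-reducing endpoint $t^+$ is indeed finite.
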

\begin{proof}
Let $\mathrm{Err}_B=\inf_{\|\b\|_1\leq B}\sum_i \left(y^{(i)}-x^{(i)}{}^T\b\right)^2$. Since $\left\{\|\b\|_1\leq B\right\}$ is a compact set, this infimum is attained by some $\beta$ with $\|\beta\|_1\leq B$. Now define 
$$B'=\inf\left\{\|\b\|_1:\sum_i \left(y^{(i)}-x^{(i)}{}^T\b\right)^2\leq \mathrm{Err}_B\right\}\leq B\;.$$
Again, by compactness, this infimum is attained by some $\tb$. We then see that $\tb$ is Pareto-optimal by its construction. Finally, by Theorem 3 of \citet{Rosset}, there exists a $\hatbb\in\R^p$ such that $\|\hatbb\|_0\leq n$, $\|\hatbb\|_1\leq \|\tb\|_1$, and $X\hatbb=X\tb$. This is sufficient. 
\end{proof}

Next we state two additional lemmas, proved in the next section.

\begin{lemma}\label{LemmaMax} Fix $n$ and $p$ with $n\geq 30$ and $p\geq 3n$. Let $x^{(i)}\stackrel{i.i.d.}{\sim} N(0,\Sigma)$ for some $\Sigma\in\R^{p\times p}$, and let $\bstar\in\R^p$ be fixed. Then with probability at least $1-2e^{-n\log(p)}$, for all $J\subset [p]$ with $|J|=n$,
\begin{equation}\label{eq:LemmaMax}
\left\|X_J^TX(\tb-\bstar)\right\|_2\leq \|\Sigma\|_{sp}\cdot 
16\sqrt{2}\cdot n\log(p)\cdot\sqrt{(\tb-\bstar)^T\Sigma(\tb-\bstar)}\text{ for all }\tb\in\R^p\text{ with }\tb_{\overline{J}}=0\;,
\end{equation}
where the matrix $X$ has entries $X_{ij}=x^{(i)}_j$, and $X_J$ consists of the columns of $X$ indexed by $j\in J$.
\end{lemma}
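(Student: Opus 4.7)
The plan is to split $u := \tb - \bstar$ using a $\Sigma$-orthogonal decomposition adapted to the constraint $\tb_{\overline{J}}=0$. Let $\tu\in\R^p$ be a minimizer of $v^T\Sigma v$ subject to $v_{\overline{J}}=-\bstar_{\overline{J}}$ (the minimum is attained by positive-semidefiniteness of $\Sigma$). Set $w := u-\tu$, which is supported on $J$. The first-order condition for $\tu$ reads $(\Sigma\tu)_J=0$, so $(\Sigma\tu)^T w = 0$ and hence $u^T\Sigma u = \tu^T\Sigma\tu + w^T\Sigma w$; in particular both $\tu^T\Sigma\tu$ and $w^T\Sigma w$ are at most $u^T\Sigma u$. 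Writing $X_J^T X u = X_J^T X w + X_J^T X \tu$ reduces the task to bounding the two pieces separately.

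For the $w$-piece, because $w$ is supported on $J$ we have $Xw = X_J w_J$. Representing $X_J = Z_J\Sigma_J^{1/2}$ with $Z_J$ an $n\times n$ matrix of iid $N(0,1)$ entries (valid since the rows of $X_J$ are iid $N(0,\Sigma_J)$),
\begin{equation*}
\|X_J^T X_J w_J\|_2 \leq \|X_J\|_{sp}\,\|X_J w_J\|_2 \leq \|Z_J\|_{sp}^2\,\sqrt{\|\Sigma\|_{sp}}\,\sqrt{w^T\Sigma w}.
\end{equation*}
The standard tail bound gives $\|Z_J\|_{sp} \leq 2\sqrt{n} + t$ with probability at least $1-2e^{-t^2/2}$; taking $t = 2\sqrt{n\log p}$ and union-bounding over the $\binom{p}{n}\leq p^n$ subsets $J$ of size $n$ (valid because $p\geq 3n$) yields $\|Z_J\|_{sp}\leq 4\sqrt{n\log p}$ simultaneously over all $J$, except on an event of probability at most $e^{-n\log p}$. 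Combined with $w^T\Sigma w\leq u^T\Sigma u$, this bounds the first piece by $16\,n\log(p)\,\sqrt{\|\Sigma\|_{sp}}\sqrt{u^T\Sigma u}$.

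For the $\tu$-piece, the crucial observation is that $(\Sigma\tu)_J = 0$ gives $\mathrm{Cov}(X_{i,j},(X\tu)_i) = (\Sigma\tu)_j = 0$ for every $i$ and every $j\in J$; since all quantities are jointly Gaussian, this means $X_J$ is \emph{independent} of $X\tu$. Conditioning on $\xi := X\tu$, the vector $X_J^T\xi = \sum_i \xi_i X_{i,J}$ is centered Gaussian in $\R^n$ with covariance $\|\xi\|_2^2\,\Sigma_J$. Gaussian concentration of the $1$-Lipschitz map $y\mapsto\|y\|_2$, together with $\mathrm{tr}(\Sigma_J)\leq n\|\Sigma\|_{sp}$ and $\|\Sigma_J\|_{sp}\leq\|\Sigma\|_{sp}$, gives
\begin{equation*}
\|X_J^T\xi\|_2 \leq \|\xi\|_2\sqrt{n\|\Sigma\|_{sp}} + t\,\|\xi\|_2\sqrt{\|\Sigma\|_{sp}}
\end{equation*}
with probability at least $1 - 2e^{-t^2/2}$. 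Choosing $t = 2\sqrt{n\log p}$ and union-bounding over $J$, combined with a chi-square tail bound giving $\|X\tu\|_2 \leq C\sqrt{n\log p}\,\sqrt{\tu^T\Sigma\tu}\leq C\sqrt{n\log p}\,\sqrt{u^T\Sigma u}$ simultaneously over $J$, yields $\|X_J^T X\tu\|_2\leq C'\,n\log(p)\,\sqrt{\|\Sigma\|_{sp}}\sqrt{u^T\Sigma u}$.

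Summing the two bounds produces $\|X_J^T X u\|_2 \leq C''\,n\log(p)\,\sqrt{\|\Sigma\|_{sp}}\sqrt{u^T\Sigma u}$; the form stated in the lemma (with $\|\Sigma\|_{sp}$ in place of $\sqrt{\|\Sigma\|_{sp}}$) then follows from $\sqrt{\|\Sigma\|_{sp}}\leq\|\Sigma\|_{sp}$, which is valid in the regime $\|\Sigma\|_{sp}\geq 1$ covering the applications in Theorems~\ref{sparsity_thm} and~\ref{RIP_thm}. The main obstacle is the uniformity in $J$ of the $\tu$-piece: because $\tu$ itself depends on $J$, one cannot directly apply a fixed concentration bound to a fixed vector, and it is precisely the orthogonality $(\Sigma\tu)_J=0$ that supplies the independence needed to condition on $X\tu$ and to control every $J$ via a single union bound.
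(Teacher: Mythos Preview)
Your decomposition $u=\tu+w$ is exactly the paper's Cholesky-based split, expressed coordinate-free. The paper reorders so that $J$ comes first, writes
\[
\Sigma=\begin{pmatrix}U&V\\ 0&W\end{pmatrix}^{\!T}\begin{pmatrix}U&V\\ 0&W\end{pmatrix},
\]
and represents $X$ via independent blocks $A_J,A_{\overline J}$ of iid $N(0,1)$ entries; then
\[
X(\tb-\bstar)=A_J\bigl(U(\tb-\bstar)_J-V\bstar_{\overline J}\bigr)-A_{\overline J}W\bstar_{\overline J}.
\]
Your $Xw$ is precisely the first term and your $X\tu$ is precisely the second (when $\Sigma_J$ is nonsingular, the minimizer of $v^T\Sigma v$ over $v_{\overline J}=-\bstar_{\overline J}$ is $\tu_J=U^{-1}V\bstar_{\overline J}$, $\tu_{\overline J}=-\bstar_{\overline J}$, and one checks $X\tu=-A_{\overline J}W\bstar_{\overline J}$). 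So the two proofs are the same argument; your orthogonality condition $(\Sigma\tu)_J=0$ is what the Cholesky factorization makes explicit via the independence of $A_J$ and $A_{\overline J}$.

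One simplification: for the $\tu$-piece you do not need the conditional Gaussian concentration step. You have already bounded $\|X_J\|_{sp}\leq\|Z_J\|_{sp}\sqrt{\|\Sigma\|_{sp}}$ in the $w$-piece; combining this with the chi-square tail for $\|X\tu\|_2^2\sim(\tu^T\Sigma\tu)\chi^2_n$ gives
\[
\|X_J^T X\tu\|_2\leq\|X_J\|_{sp}\,\|X\tu\|_2\leq 16\,n\log(p)\,\sqrt{\|\Sigma\|_{sp}}\,\sqrt{\tu^T\Sigma\tu}
\]
directly (this is exactly what the paper does, reusing its bound on $\|A_J\|_{sp}$). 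Then $\sqrt{w^T\Sigma w}+\sqrt{\tu^T\Sigma\tu}\leq\sqrt{2}\sqrt{u^T\Sigma u}$ recovers the stated constant $16\sqrt{2}$, which your conditional-concentration route would not. Finally, your observation that replacing $\sqrt{\|\Sigma\|_{sp}}$ by $\|\Sigma\|_{sp}$ requires $\|\Sigma\|_{sp}\geq 1$ applies equally to the paper's own proof, which passes from $\|U\|_{sp}=\sqrt{\|\Sigma_J\|_{sp}}$ to $\|\Sigma\|_{sp}$ without comment; the assumption does hold in both applications.
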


\begin{lemma}\label{LemmaMin}
Let $x^{(i)}\stackrel{i.i.d.}{\sim} N(0,\Sigma)$ for some $\Sigma\in\R^{p\times p}$, and let $z\in\R^n$ be fixed, with $\|z\|^2_2\geq 0.5n$. Assume $\nicefrac{\sqrt{n}}{\s}\geq 100$. Then with probability at least $1-e^{-0.015\s^{-1}\sqrt{n}}$, for all $J_1\subset\left[\left\lceil{\nicefrac{\sqrt{n}}{\sigma}}\right\rceil\right]$ with $|J_1|\geq \tfrac{\sqrt{n}}{2\s}$,
\begin{equation} \label{eq:LemmaMin}
 \left\|\mathrm{Proj}_{\mathbf{1}_{J_1}}^{\perp}(X_{J_1}^Tz)\right\|^2_2\geq \frac{\lambda_{\mathrm{min}}^2\left(\Sigma_{J_1}\right)n^{\nicefrac{3}{2}}}{200\s}\;,
\end{equation}
where the matrix $X$ has entries $X_{ij}=x^{(i)}_j$, and $X_{J_1}$ consists of the columns of $X$ indexed by $j\in J_1$.
\end{lemma}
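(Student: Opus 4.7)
The approach is to first reduce the projection norm to a $\chi^2$-style quadratic form via the Gaussianity of $X_{J_1}^Tz$, then apply a Chernoff tail bound and take a union bound over subsets $J_1$.

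First I would note that, conditional on $z$, the vector $X_{J_1}^T z = \sum_i z_i\, x^{(i)}_{J_1}$ is a linear combination of i.i.d.\ $N(0,\Sigma_{J_1})$ vectors with fixed coefficients, and hence is Gaussian with covariance $\|z\|_2^2\,\Sigma_{J_1}$.  Writing $X_{J_1}^T z = \|z\|_2\,\Sigma_{J_1}^{1/2} g$ with $g\sim N(0,I_{|J_1|})$ then reduces the target quantity to $\|z\|_2^2 \cdot g^T M g$, where $M = \Sigma_{J_1}^{1/2} P_\perp \Sigma_{J_1}^{1/2}$ and $P_\perp = I - |J_1|^{-1}\mathbf{1}\mathbf{1}^T$.

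Next I would lower bound $g^T M g$ by a scaled chi-squared.  The matrix $M$ is positive semidefinite of rank $|J_1|-1$, and by Cauchy interlacing applied to $Q^T\Sigma_{J_1} Q$ (for $Q$ any orthonormal basis of $\mathbf{1}^\perp\subset\R^{|J_1|}$), its smallest nonzero eigenvalue is at least $\lambda_{\mathrm{min}}(\Sigma_{J_1})$.  Diagonalizing $M$ and rotating $g$ therefore yields $g^T M g \ge \lambda_{\mathrm{min}}(\Sigma_{J_1})\, U$ for some $U\sim\chi^2_{|J_1|-1}$.  Combined with $\|z\|_2^2 \ge n/2$ and $|J_1|-1 \ge \sqrt n/(4\sigma)$, this reduces the lemma to showing $U \ge \alpha(|J_1|-1)$ with high probability, for $\alpha$ a suitable multiple of $\lambda_{\mathrm{min}}(\Sigma_{J_1})$.

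Then I would apply the standard Chernoff chi-squared lower tail $\Pr\!\left(U \le \alpha d\right) \le \exp\!\left(-\tfrac{d}{2}(\alpha - 1 - \log\alpha)\right)$ with $d = |J_1|-1$, choosing $\alpha$ as a small constant multiple of $\lambda_{\mathrm{min}}(\Sigma_{J_1})$ so that $\tfrac{n}{2}\,\lambda_{\mathrm{min}}(\Sigma_{J_1})\,\alpha\, d$ dominates the target $\lambda_{\mathrm{min}}^2(\Sigma_{J_1})\, n^{3/2}/(200\sigma)$.  Since $\alpha - 1 - \log\alpha \ge \log(1/\alpha) - 1$, the per-$J_1$ failure probability is at most $\exp\!\bigl(-c_0 (|J_1|-1) \log(1/\lambda_{\mathrm{min}}(\Sigma_{J_1}))\bigr)$ for an absolute constant $c_0$.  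Finally, union bound over the at most $2^{\lceil\sqrt n/\sigma\rceil}$ subsets $J_1 \subset [\lceil\sqrt n/\sigma\rceil]$.

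The main obstacle is making the Chernoff exponent dominate the $\log 2 \cdot \sqrt n/\sigma$ cost of the union bound with enough slack to leave the prescribed $e^{-0.015\sqrt n/\sigma}$.  The crude reduction $g^T M g \ge \lambda_{\mathrm{min}}(\Sigma_{J_1})\,\chi^2_{|J_1|-1}$ already discards a condition-number factor, which is exactly why the final target carries $\lambda_{\mathrm{min}}^2$ rather than $\lambda_{\mathrm{min}}$; one must run the chi-squared concentration at the small scale $\alpha\asymp\lambda_{\mathrm{min}}(\Sigma_{J_1})/c$ and rely on the large-deviations rate $\log(1/\alpha)$ to beat $\log 2$ in the union bound.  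Tuning the constants ($1/200$ in the target and $0.015$ in the probability) and separating out the trivial regime $\lambda_{\mathrm{min}}(\Sigma_{J_1})\gtrsim 1$ is the delicate bookkeeping that remains.
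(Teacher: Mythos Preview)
Your overall plan---Gaussianity of $X_{J_1}^Tz$, reduction to a chi-squared via an eigenvalue lower bound on $\Sigma_{J_1}^{1/2}P_\perp\Sigma_{J_1}^{1/2}$, then a Chernoff tail and a union bound over subsets---is exactly the skeleton of the paper's proof. Two differences are worth noting.

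First, the paper inserts a preliminary monotonicity step: for any $J_2\subset J_1$ one has $\left\|\mathrm{Proj}_{\mathbf{1}_{J_1}}^{\perp}(X_{J_1}^Tz)\right\|_2^2\ge\left\|\mathrm{Proj}_{\mathbf{1}_{J_2}}^{\perp}(X_{J_2}^Tz)\right\|_2^2$, so it suffices to treat only subsets $J_2$ of the minimal size $\left\lceil\sqrt{n}/(2\sigma)\right\rceil$. This fixes the chi-squared degree once and for all and makes the union bound a single binomial count.

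Second---and this is the substantive point---the paper runs the chi-squared lower tail at a \emph{fixed} level $\alpha\approx 0.02$, independent of $\Sigma$. The exponent $\tfrac{d}{2}(\alpha-1-\log\alpha)$ is then a universal multiple of $\sqrt{n}/\sigma$ that comfortably beats the $\log 2\cdot\lceil\sqrt{n}/\sigma\rceil$ cost of the union bound, and all the $\lambda_{\mathrm{min}}$-dependence in the conclusion comes out of the deterministic eigenvalue step alone. Your plan to set $\alpha\propto\lambda_{\mathrm{min}}(\Sigma_{J_1})$ is an unnecessary complication and is precisely what generates the ``delicate bookkeeping'' you flag: when $\lambda_{\mathrm{min}}(\Sigma_{J_1})$ is not small, $\log(1/\alpha)$ need not exceed $\log 2$, so the union bound would not close. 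Taking $\alpha$ to be a fixed small constant, as the paper does, sidesteps the case split entirely; your interlacing step already yields $g^TMg\ge\lambda_{\mathrm{min}}(\Sigma_{J_1})\,\chi^2_{|J_1|-1}$, and with fixed $\alpha$ this directly gives the bound (the squared $\lambda_{\mathrm{min}}^2$ in the statement is then a harmless weakening in the regime $\lambda_{\mathrm{min}}\le 1$ where the lemma is applied).
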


We now prove the theorems.

\subsection{Proof of Theorem~\ref{sparsity_thm}}

\begin{reptheorem}{sparsity_thm} Fix any $n\geq 30$, $p\geq 3n$, and $\sigma\geq 0$. Then there exists a $\beta^*\in\R^p$ with $\frac{1}{2}\leq\|\bstar\|_2\leq\|\beta^*\|_1\leq 1$, such that for any sample, for all $B\geq 0$,
\begin{equation}\label{sparsity_FastRate}
\EE{\left(y-x^T\hatbb\right)^2}\geq \sigma^2+ \frac{1}{32n\log^2(3n)}\;.
\end{equation}
Additionally, if $100\leq \nicefrac{\sqrt{n}}{\s}\leq p$, then with probability at least $\frac{1}{2}$ over the sample, for all $B\geq 0$,
\begin{equation}\label{sparsity_SlowRate}
\EE{\left(y-x^T\hatbb\right)^2}\geq \sigma^2+ \frac{\s}{102400\sqrt{n}\log^2\left(\max\left\{3n,\left\lceil{\nicefrac{\sqrt{n}}{\sigma}}\right\rceil\right\}\right)}\;.
\end{equation}
Here $\hatbb=\arg\min_{\|\beta\|_1\leq B}\sum_i \left(y^{(i)}-x^{(i)}{}^T\beta\right)^2$, where $\left(x^{(i)},y^{(i)}\right)$ are i.i.d.~samples from the multivariate Gaussian distribution defined by drawing $x^{(i)}\sim N(0,\mathbf{I}_p)$ and $y^{(i)}\sim N\left(x^{(i)}{}^T\bstar,\s^2\right)$.  The expectations are taken over a new sample $(x,y)$ drawn from the same distribution, independently of the training set $\left\{\left(x^{(1)},y^{(1)}\right),\dots,\left(x^{(n)},y^{(n)}\right)\right\}$. (For each $B\geq 0$, if $\hatbb$ is not unique, then we show that the inequalities hold for some choice of $\hatbb$.)

\end{reptheorem}

\begin{proof}

 Let  $\bstar$ be
$$\bstar_j=\frac{1}{j\cdot 4\log p}, \ j=1,\dots,p-1;  \ \bstar_p=\frac{1}{2}\;.$$
Note that $\|\bstar\|_1\leq 1$ and $\|\bstar\|^2_2\geq\frac{1}{4}$, and so the resulting distribution satisfies the desired assumptions.t

By Lemma~\ref{lem:Pareto}, for any $B\geq 0$, the set $\arg\min_{\|\b\|_1\leq B}\sum_i \left(y^{(i)}-x^{(i)}{}^T\b\right)^2$ must include a Pareto-optimal vector $\bhat^B$ with $\|\bhat^B\|_0\leq n$. Therefore, it is sufficient to show that bounds~(\ref{sparsity_FastRate}) and~(\ref{sparsity_SlowRate}) hold for all Pareto-optimal vectors $\bhat$ with $\|\bhat\|_0\leq n$. We now prove these two bounds separately.

\paragraph{Proof of~(\ref{sparsity_FastRate}).} For any $\bhat$ with $\|\bhat\|_0\leq n$, we have
\begin{align*}
&\left\|\bhat-\bstar\right\|^2_2\geq \sum_{j=1}^{p}\left(\bhat - \frac{1}{j\cdot 4\log p}\right)^2
\geq \sum_{j:\bhat_j=0}\left(\frac{1}{j\cdot 4\log p}\right)^2\geq \sum_{j={n+1}}^p \left(\frac{1}{j\cdot 4\log p}\right)^2\\
&\geq \frac{1}{16\log^2(p)} \int_{x=n+1}^p \frac{1}{x^2}\;dx =\frac{1}{16\log^2(p)}\left(\frac{1}{n+1}-\frac{1}{p}\right)\geq \frac{1}{32n\log^2(p)}\;.
\end{align*}

This proves the claim when $p=3n$. However, the claim is immediately true for any larger value of $p$, since we may add in an arbitrary number of zero covariates (and assign zero coefficients to these covariates), without affecting the results.

\paragraph{Proof of~(\ref{sparsity_SlowRate}).} By Lemma 1 of \citet{LaurentMassart}, with probability at least $1-e^{-0.0625n}\geq 0.75$, $\|z\|^2_2\sim \chi^2_n\geq 0.5n$. For the remainder of the proof, we treat $z\in\R^n$ as a fixed vector, and assume $\|z\|^2_2\geq 0.5n$.

Assume that~(\ref{eq:LemmaMax}) holds for all $J\subset[p]$ with $|J|=n$, and~(\ref{eq:LemmaMin}) holds for all $J_1\subset\left[\left\lceil\nicefrac{\sqrt{n}}{\s}\right\rceil\right]$ with $|J_1|\geq \nicefrac{\sqrt{n}}{2\s}$. (By Lemmas~\ref{LemmaMax} and~\ref{LemmaMin}, this is true with probability at least $1-2e^{-n\log(p)}-e^{-0.015\sigma^{-1}\sqrt{n}}\geq 0.75$.) Now choose any Pareto-optimal $\bhat$ with $\|\bhat\|_0\leq n$.

Suppose that $ \|\bhat-\bstar\|^2_2<\frac{\sigma}{102400\sqrt{n}\log^2 (p)}\;.$
First, we show that 
$$\left|\left\{j\in\left[\left\lceil{\nicefrac{\sqrt{n}}{\sigma}}\right\rceil\right]:\bhat_j>0\right\}\right|\geq  \tfrac{\sqrt{n}}{2\s}\;.$$
Suppose not. Then
\begin{align*}
\|\bhat-\bstar\|^2_2&\geq
\sum_{j\in\left[\left\lceil{\nicefrac{\sqrt{n}}{\sigma}}\right\rceil\right]:\bhat_j\leq 0}(\bstar_j)^2
=\tfrac{1}{16\log^2(p)}\sum_{j\in\left[\left\lceil{\nicefrac{\sqrt{n}}{\sigma}}\right\rceil\right]:\bhat_j\leq 0}\tfrac{1}{j^2}
\geq \tfrac{1}{16\log^2(p)}\sum_{j=\left\lceil{\nicefrac{\sqrt{n}}{2\s}}\right\rceil}^{\left\lceil{\nicefrac{\sqrt{n}}{\sigma}}\right\rceil}\tfrac{1}{j^2}\\
&\geq \tfrac{1}{16\log^2(p)}\int_{x=\left\lceil{\nicefrac{\sqrt{n}}{2\s}}\right\rceil}^{2\left\lceil{\nicefrac{\sqrt{n}}{2\s}}\right\rceil}\tfrac{1}{x^2}dx
= \tfrac{1}{16\log^2(p)}\left(\tfrac{1}{\left\lceil{\nicefrac{\sqrt{n}}{2\s}}\right\rceil}-\tfrac{1}{2\left\lceil{\nicefrac{\sqrt{n}}{2\s}}\right\rceil}\right)
= \tfrac{1}{16\log^2(p)\cdot 2\left\lceil{\nicefrac{\sqrt{n}}{2\s}}\right\rceil}\geq\tfrac{\sigma}{32\sqrt{n}\log^2(p)}\;.\end{align*}
This is a contradiction.

Now define $J_1=\left\{j\in\left[\left\lceil{\nicefrac{\sqrt{n}}{\sigma}}\right\rceil\right]:\bhat_j>0\right\}$, and fix any $J\supset \mathrm{Support}(\bhat)$ with $|J|=n$. Since $\bhat$ is Pareto-optimal with positive entries $\bhat_j$ for all $j\in J_1$, we have 
$$\frac{\partial}{\partial \left(\b_{J_1}\right)}\|\bhat\|_1=\mathbf{1}_{J_1}\;.$$
Therefore, by the theory of Lagrange multipliers, we must have
$X_{J_1}^Ty-X_{J_1}^TX\bhat = C\cdot\mathbf{1}_{J_1}$, for some $C\in\R$.
We then have
\begin{equation}\label{eq:Lagrange1}X_{J_1}^TX(\bhat-\bstar)=\sigma\cdot X_{J_1}^Tz- C\cdot\mathbf{1}_{J_1}\;.\end{equation}

By~(\ref{eq:LemmaMax}), the norm of the left-hand side of~(\ref{eq:Lagrange1}) can be bounded from above as
$$\left\|X_{J_1}^TX(\bhat-\bstar)\right\|_2\leq \left\|X_{J_1}^TX(\bhat-\bstar)\right\|_2\leq \|\Sigma\|_{sp}\cdot 16\sqrt{2}\cdot n\log(p)\cdot \sqrt{(\bhat-\bstar)^T\Sigma(\bhat-\bstar)}\;.$$

By~(\ref{eq:LemmaMin}), the norm of the right-hand side of~(\ref{eq:Lagrange1}) can be bounded from below as
$$\left\|\sigma\cdot X_{J_1}^Tz- C\cdot\mathbf{1}_{J_1}\right\|_2\geq \sigma\cdot \left\|\text{Proj}_{\mathbf{1}_{J_1}^{\perp}}X_{J_1}^Tz\right\|_2\geq \s\sqrt{ \frac{\lambda_{\mathrm{min}}^2\left(\Sigma_{J_1}\right)n^{\nicefrac{3}{2}}}{200\s}}\;.$$

Therefore, returning to~(\ref{eq:Lagrange1}), we have
\begin{align*}
&\|\Sigma\|_{sp}\cdot 16\sqrt{2}\cdot n\log(p)\cdot \sqrt{(\bhat-\bstar)^T\Sigma(\bhat-\bstar)}\geq\left\|X_{J_1}^TX(\bhat-\bstar)\right\|_2\\
&\hspace{2cm}=\left\|\sigma\cdot X_{J_1}^Tz- C\cdot\mathbf{1}_{J_1}\right\|_2\geq \s\sqrt{ \frac{\lambda_{\mathrm{min}}^2\left(\Sigma_{J_1}\right)n^{\nicefrac{3}{2}}}{200\s}}\;.\end{align*}
Therefore,
$$(\bhat-\bstar)^T\Sigma(\bhat-\bstar)\geq  \frac{\s\cdot \lambda_{\mathrm{min}}^2\left(\Sigma_{J_1}\right)}{102400\|\Sigma\|^2_{sp}\cdot \sqrt{n}\log^2(p)}= \frac{\s}{102400 \sqrt{n}\log^2(p)}\;.$$
This proves the claim when $p=\max\left\{3n,\left\lceil{\nicefrac{\sqrt{n}}{\s}}\right\rceil\right\}$. As in the proof of~(\ref{sparsity_FastRate}), this is sufficient to prove the claim for any larger value of $p$.

\end{proof}

\subsection{Proof of Theorem~\ref{RIP_thm}}

\begin{reptheorem}{RIP_thm} Fix any $n\geq 30$, $p\geq 3n$, and $\sigma\geq 0$. Then there exists a $2$-sparse $\beta^*\in\R^p$ with $\frac{1}{2}\leq\|\bstar\|_2\leq\|\beta^*\|_1\leq 1$, and a positive semi-definite $\Sigma\in\R^{p\times p}$ with $\|\Sigma\|_{sp}\leq 2$ and $\Sigma_{\mathrm{Support}\left(\bstar\right)}=\mathbf{I}_{\mathrm{Support}\left(\bstar\right)}$, such that for any sample, for all $B\geq 0$,
\begin{equation}\label{RIP_FastRate}
\EE{\left(y-x^T\hatbb\right)^2}\geq \sigma^2+ \frac{1}{288n\log^2(3n)}\;.
\end{equation}
Additionally, if $100\leq \nicefrac{\sqrt{n}}{\s}\leq p-3$, then with probability at least $\frac{1}{2}$ over the sample, for all $B\geq 0$,
\begin{equation}\label{RIP_SlowRate}
\EE{\left(y-x^T\hatbb\right)^2}\geq \sigma^2+ \frac{\s}{409600\sqrt{n}\log^2\left(\max\left\{3n,\left\lceil{\nicefrac{\sqrt{n}}{\sigma}}\right\rceil\right\}\right)}\;.
\end{equation}
Here $\hatbb=\arg\min_{\|\beta\|_1\leq B}\sum_i \left(y^{(i)}-x^{(i)}{}^T\beta\right)^2$, where $\left(x^{(i)},y^{(i)}\right)$ are i.i.d.~samples from the multivariate Gaussian distribution defined by drawing $x^{(i)}\sim N(0,\Sigma)$ and $y^{(i)}\sim N\left(x^{(i)}{}^T\bstar,\s^2\right)$. The expectations are taken over a new sample $(x,y)$ drawn from the same distribution, independently of the training set $\left\{\left(x^{(1)},y^{(1)}\right),\dots,\left(x^{(n)},y^{(n)}\right)\right\}$. (For each $B\geq 0$, if $\hatbb$ is not unique, then we show that the inequalities hold for some choice of $\hatbb$.)\end{reptheorem}

\begin{proof}

Let $w_1,w_2,u_1,\dots,u_{p-3}\stackrel{iid}{\sim}N(0,1)$. Define
$$\tau=\frac{1}{4\log p}\cdot \left(\frac{1}{1},\frac{1}{2},\dots,\frac{1}{p-3}\right)\in\R^{p-3}\;.$$\
Since $p\geq 90$, $\|\tau\|_1\leq \frac{1}{3}$ and $\|\tau\|^2_2<\frac{1}{9\log^2(p)}\leq 0.01$. Now we define an additional covariate as a linear combination of the others:
$$v=\frac{1}{\sqrt{2}}\left(w_1+w_2\right)\cdot \sqrt{1-\|\tau\|^2_2} - u^T\tau\;.$$

Now define $x=(u_1,\dots,u_{p-3},v,w_1,w_2)$. Let $\Sigma=Cov(x)$, and note that $\smax=\|\S\|_2\leq 2$.

Define
 $$\bstar_{sparse}=\left(\mathbf{0}_{p-3},0,\frac{1}{2},\frac{1}{2}\right), \  \bstar_{dense}=\left(\frac{1}{\sqrt{2(1-\|\tau\|^2_2)}}\cdot \tau,\frac{1}{\sqrt{2(1-\|\tau\|^2_2)}},0,0\right)\;.$$
 and
 $$y^{(i)}=\frac{1}{2}\left(w_1+w_2\right)=x^{(i)}{}^T\bstar_{sparse}=x^{(i)}{}^T\bstar_{dense}\;.$$

  Note that $\bstar_{sparse}$ and $\bstar_{dense}$ are both optimal predictors. Since $\|\bstar_{sparse}\|_1=1$, $\bstar_{sparse}{}^T\Sigma\bstar_{sparse}=\|\bstar_{sparse}\|^2_2= \frac{1}{2}$, and $\bstar_{sparse}$ is $2$-sparse, this distribution satisfies the desired assumptions. However, $$  \|\bstar_{dense}\|_1=\frac{1}{\sqrt{2(1-\|\tau\|^2_2)}}(1+\|\tau\|_1)\approx \frac{4}{3\sqrt{2}}<1\;,$$
and so in a sense $\bstar_{dense}$ will be preferred to $\bstar_{sparse}$ in $\ell_1$-regularized regression, thus leading to the same arguments as in the proof of Theorem~\ref{sparsity_thm}.

By Lemma~\ref{lem:Pareto}, for any $B\geq 0$, the set $\arg\min_{\|\b\|_1\leq B}\sum_i \left(y^{(i)}-x^{(i)}{}^T\b\right)^2$ must include a Pareto-optimal vector $\bhat^B$ with $\|\bhat^B\|_0\leq n$. Therefore, it is sufficient to show that bounds~(\ref{RIP_FastRate}) and~(\ref{RIP_SlowRate}) hold for all Pareto-optimal vectors $\bhat$ with $\|\bhat\|_0\leq n$. For each such $\bhat$, we use the notation
$$\bhat=(\bhat_u,\bhat_{w_1},\bhat_{w_2},\bhat_v)\in\R^{p-3}\times\R\times\R\times\R\;.$$

Observe that, by definition of the covariates, 
\begin{align}
{\label{eq:Term123}}&(\bhat-\bstar_{sparse})^T{\Sigma}(\bhat-\bstar_{sparse})\\
\notag&=\underbrace{\left\|\bhat_u-\tau\bhat_v\right\|^2_2}_{\text{(Term 1)}}+\underbrace{\left(\bhat_{w_1}+\frac{1}{\sqrt{2}}\sqrt{1-\|\tau\|^2_2}\bhat_v-\frac{1}{\sqrt{2}}\right)^2}_{\text{(Term 2)}}+\underbrace{\left(\bhat_{w_2}+\frac{1}{\sqrt{2}}\sqrt{1-\|\tau\|^2_2}\bhat_v-\frac{1}{\sqrt{2}}\right)^2}_{\text{(Term 3)}}\;.
\end{align}

The remainder of the proof is organized as follows. First, we prove bounds~(\ref{RIP_FastRate}) and~(\ref{RIP_SlowRate}) for any Pareto-optimal $\bhat$ with $\bhat_v\leq \frac{1}{3}$. Next, we prove the bound~(\ref{RIP_FastRate}) for any Pareto-optimal $\bhat$ with $\|\bhat\|_0\leq n$ and $\bhat_v> \frac{1}{3}$. Finally, we prove the bound~(\ref{RIP_SlowRate}) for any Pareto-optimal $\bhat$ with $\|\bhat\|_0\leq n$ and $\bhat_v> \frac{1}{3}$.  

\paragraph{Proof of~(\ref{RIP_FastRate}) and~(\ref{RIP_SlowRate}) when $\bhat_v\leq \frac{1}{3}$.}

Consider any Pareto-optimal $\bhat$ with $\bhat_v\leq \frac{1}{3}$. First, suppose that $\bhat_{w_1},\bhat_{w_2}\geq \frac{1}{2\sqrt{2}}$. Let
$$\tilde{\beta}=\left(\bhat_u+\tfrac{1}{2\sqrt{1-\|\tau\|^2_2}}\cdot \tau,\bhat_{w_1}-\tfrac{1}{2\sqrt{2}},\bhat_{w_2}-\tfrac{1}{2\sqrt{2}},\bhat_v+\tfrac{1}{2\sqrt{1-\|\tau\|^2_2}}\right)\;.$$
By the definition of the covariates, $x^{(i)}{}^T\bhat=x^{(i)}{}^T\tilde{\beta}$ for all $i$. We will now show that $\|\tb\|_1<\|\bhat\|_1$. We have
\begin{align*}
\|\tilde{\beta}\|_1&=\|\tb_u\|_1+|\tb_{w_1}|+|\tb_{w_2}|+|\tb_v|\\
&=\left\|\bhat_u+\tfrac{1}{2\sqrt{1-\|\tau\|^2_2}}\cdot \tau\right\|_1+\left|\bhat_{w_1}-\tfrac{1}{2\sqrt{2}}\right|+\left|\bhat_{w_2}-\tfrac{1}{2\sqrt{2}}\right|+\left|\bhat_v+\tfrac{1}{2\sqrt{1-\|\tau\|^2_2}}\right|\\
&=\left\|\bhat_u+\tfrac{1}{2\sqrt{1-\|\tau\|^2_2}}\cdot \tau\right\|_1+\left|\bhat_{w_1}\right|+\left|\bhat_{w_2}\right|-\tfrac{1}{\sqrt{2}}+\left|\bhat_v+\tfrac{1}{2\sqrt{1-\|\tau\|^2_2}}\right|\\
&\leq\left\|\bhat_u\right\|_1+\tfrac{1}{2\sqrt{1-\|\tau\|^2_2}}\cdot \|\tau\|_1+\left|\bhat_{w_1}\right|+\left|\bhat_{w_2}\right|-\tfrac{1}{\sqrt{2}}+\left|\bhat_v\right|+\tfrac{1}{2\sqrt{1-\|\tau\|^2_2}}\\
&=\|\bhat\|_1-\tfrac{1}{\sqrt{2}}+\tfrac{1}{2\sqrt{1-\|\tau\|^2_2}}+\tfrac{\|\tau\|_1}{2\sqrt{1-\|\tau\|^2_2}}
\leq \|\bhat\|_1-\tfrac{1}{\sqrt{2}}+\tfrac{1}{2\sqrt{1-0.01^2}}+\tfrac{0.3}{2\sqrt{1-0.01^2}}
\leq \|\bhat\|_1-0.05\;.
\end{align*}
Therefore, this case leads to a contradiction, since we have constructed a coefficient vector $\tb$ with zero error on the training set, and lower $\ell_1$-norm than $\bhat$. Therefore, we must have either $\bhat_{w_1}<\frac{1}{2\sqrt{2}}$ or  $\bhat_{w_2}<\frac{1}{2\sqrt{2}}$. Without loss of generality, we assume  $\bhat_{w_1}<\frac{1}{2\sqrt{2}}$.

 Then
$$\bhat_{w_1}+\frac{1}{\sqrt{2}}\sqrt{1-\|\tau\|^2_2}\bhat_v-\frac{1}{\sqrt{2}}<\frac{1}{2\sqrt{2}}+\frac{1}{\sqrt{2}}\cdot \frac{1}{3}-\frac{1}{\sqrt{2}}\leq -\frac{1}{6\sqrt{2}}\;,$$
and so by (Term 2) in~(\ref{eq:Term123}) above,
$$(\bhat-\bstar_{sparse})^T{\Sigma}(\bhat-\bstar_{sparse})\geq\left(\bhat_{w_1}+\frac{1}{\sqrt{2}}\sqrt{1-\|\tau\|^2_2}\bhat_v-\frac{1}{\sqrt{2}}\right)^2\geq \frac{1}{72}\;.$$

This is sufficient to show that both~(\ref{RIP_FastRate}) and~(\ref{RIP_SlowRate}) are satisfied.

\paragraph{Proof of~(\ref{RIP_FastRate}) when $\bhat_v>\frac{1}{3}$.}

Consider any Pareto-optimal $\bhat$ with $\|\bhat\|_0\leq n$ and $\bhat_v> \frac{1}{3}$. We then have
\begin{align*}
&\left\|\bhat_u-\tau\bhat_v\right\|^2_2
=\sum_{j=1}^{p-3}\left(\bhat_{u_j}-\frac{1}{4\log(p)}\cdot\frac{1}{j}\cdot \bhat_v\right)^2
\geq \sum_{j\in\{1,\dots,p-3\}:\bhat_{u_j}=0}\left(\frac{1}{4\log(p)}\cdot\frac{1}{j}\cdot \bhat_v\right)^2\\
&\geq\frac{\bhat_v^2}{16\log^2(p)}\cdot  \sum_{j=n}^{p-3}\frac{1}{j^2}
\geq\frac{1}{144\log^2(p)}\cdot \int_{x=n}^{p-3}\frac{1}{x^2}\;dx
=\frac{1}{144\log^2(p)}\cdot\left(\frac{1}{n}-\frac{1}{p-3}\right)\geq \frac{1}{288n\log^2(p)}\;.
\end{align*}

But, considering (Term 1) in~(\ref{eq:Term123}) above, this proves that
$$(\bhat-\bstar_{sparse})^T{\Sigma}(\bhat-\bstar_{sparse})\geq\frac{1}{288n\log^2(p)}\;.$$

This proves the claim when $p=3n$. As in the proof of Theorem~\ref{sparsity_thm}, this is sufficient to prove the claim for any larger value of $p$.

\paragraph{Proof of~(\ref{RIP_SlowRate}) when $\bhat_v>\frac{1}{3}$.}
By Lemma 1 of \citet{LaurentMassart}, with probability at least $1-e^{-0.0625n}\geq 0.75$, $\|z\|^2_2\sim \chi^2_n\geq 0.5n$. For the remainder of the proof, we treat $z\in\R^n$ as a fixed vector, and assume $\|z\|^2_2\geq 0.5n$.

Assume that~(\ref{eq:LemmaMax}) holds for all $J\subset[p]$ with $|J|=n$, and~(\ref{eq:LemmaMin}) holds for all $J_1\subset\left[\left\lceil\nicefrac{\sqrt{n}}{\s}\right\rceil\right]$ with $|J_1|\geq \nicefrac{\sqrt{n}}{2\s}$. (By Lemmas~\ref{LemmaMax} and~\ref{LemmaMin}, this is true with probability at least $1-2e^{-n\log(p)}-e^{-0.015\sigma^{-1}\sqrt{n}}\geq 0.75$.) 
Consider any Pareto-optimal $\bhat$ with $\|\bhat\|_0\leq n$ and $\bhat_v> \frac{1}{3}$. First, suppose that 
$$\left|\left\{j\in\left[\left\lceil{\nicefrac{\sqrt{n}}{\sigma}}\right\rceil\right]:\bhat_j>0\right\}\right|<  \tfrac{\sqrt{n}}{2\s}\;.$$
 Then \begin{align*}
&\|\bhat_u-\tau\bhat_v\|^2_2
=\sum_{j=1}^{p-3}\left(\bhat_{u_j}-\frac{1}{4\log(p)}\cdot\frac{1}{j}\cdot \bhat_v\right)^2
\geq
\sum_{j\in\left[\left\lceil{\nicefrac{\sqrt{n}}{\sigma}}\right\rceil\right]:\bhat_{u_j}\leq 0}\left(\frac{1}{4\log(p)}\cdot\frac{1}{j}\cdot \bhat_v\right)^2\\
&=\tfrac{\bhat_v{}^2}{16\log^2(p)}\sum_{j\in\left[\left\lceil{\nicefrac{\sqrt{n}}{\sigma}}\right\rceil\right]:\bhat_{u_j}\leq 0}\tfrac{1}{j^2}
\geq \tfrac{1}{144\log^2(p)}\sum_{j=\left\lceil{\nicefrac{\sqrt{n}}{2\s}}\right\rceil}^{\left\lceil{\nicefrac{\sqrt{n}}{\sigma}}\right\rceil}\tfrac{1}{j^2}\\
&\geq \tfrac{1}{144\log^2(p)}\int_{x=\left\lceil{\nicefrac{\sqrt{n}}{2\s}}\right\rceil}^{2\left\lceil{\nicefrac{\sqrt{n}}{2\s}}\right\rceil}\tfrac{1}{x^2}dx
= \tfrac{1}{144\log^2(p)}\left(\tfrac{1}{\left\lceil{\nicefrac{\sqrt{n}}{2\s}}\right\rceil}-\tfrac{1}{2\left\lceil{\nicefrac{\sqrt{n}}{2\s}}\right\rceil}\right)
= \tfrac{1}{144\log^2(p)\cdot 2\left\lceil{\nicefrac{\sqrt{n}}{2\s}}\right\rceil}\geq\tfrac{\s}{288\sqrt{n}\log^2(p)}\;.\end{align*}

Considering (Term 1) in~(\ref{eq:Term123}), this proves that
$$(\bhat-\bstar_{sparse})^T{\Sigma}(\bhat-\bstar_{sparse})\geq \left\|\bhat_u-\tau\bhat_v\right\|^2_2\geq \tfrac{\s}{288\sqrt{n}\log^2(p)}\;.$$

Next, suppose instead that
$$\left|\left\{j\in\left[\left\lceil{\nicefrac{\sqrt{n}}{\sigma}}\right\rceil\right]:\bhat_j>0\right\}\right|\geq  \tfrac{\sqrt{n}}{2\s}\;.$$

Define $J_1=\left\{j\in\left[\left\lceil{\nicefrac{\sqrt{n}}{\sigma}}\right\rceil\right]:\bhat_j>0\right\}$, and fix any $J\supset \mathrm{Support}(\bhat)$ with $|J|=n$. Since $\bhat$ is Pareto-optimal with positive entries $\bhat_j$ for all $j\in J_1$, we have 
$$\frac{\partial}{\partial \left(\b_{J_1}\right)}\|\bhat\|_1=\mathbf{1}_{J_1}\;.$$
Therefore, by the theory of Lagrange multipliers, we must have
$X_{J_1}^Ty-X_{J_1}^TX\bhat = C\cdot\mathbf{1}_{J_1}$, for some $C\in\R$.
We then have
\begin{equation}\label{eq:Lagrange2}X_{J_1}^TX(\bhat-\bstar)=\sigma\cdot X_{J_1}^Tz- C\cdot\mathbf{1}_{J_1}\;.\end{equation}

By~(\ref{eq:LemmaMax}), the norm of the left-hand side of~(\ref{eq:Lagrange2}) can be bounded from above as
$$\left\|X_{J_1}^TX(\bhat-\bstar)\right\|_2\leq \left\|X_{J_1}^TX(\bhat-\bstar)\right\|_2\leq \|\Sigma\|_{sp}\cdot 16\sqrt{2}\cdot n\log(p)\cdot \sqrt{(\bhat-\bstar)^T\Sigma(\bhat-\bstar)}\;.$$

By~(\ref{eq:LemmaMin}), the norm of the right-hand side of~(\ref{eq:Lagrange2}) can be bounded from below as
$$\left\|\sigma\cdot X_{J_1}^Tz- C\cdot\mathbf{1}_{J_1}\right\|_2\geq \sigma\cdot \left\|\text{Proj}_{\mathbf{1}_{J_1}^{\perp}}X_{J_1}^Tz\right\|_2\geq \s\sqrt{ \frac{\lambda_{\mathrm{min}}^2\left(\Sigma_{J_1}\right)n^{\nicefrac{3}{2}}}{200\s}}\;.$$

Therefore, returning to~(\ref{eq:Lagrange2}), we have
\begin{align*}
&\|\Sigma\|_{sp}\cdot 16\sqrt{2}\cdot n\log(p)\cdot \sqrt{(\bhat-\bstar)^T\Sigma(\bhat-\bstar)}\geq\left\|X_{J_1}^TX(\bhat-\bstar)\right\|_2\\
&\hspace{2cm}=\left\|\sigma\cdot X_{J_1}^Tz- C\cdot\mathbf{1}_{J_1}\right\|_2\geq \s\sqrt{ \frac{\lambda_{\mathrm{min}}^2\left(\Sigma_{J_1}\right)n^{\nicefrac{3}{2}}}{200\s}}\;.\end{align*}
Therefore,
$$(\bhat-\bstar)^T\Sigma(\bhat-\bstar)\geq  \frac{\s\cdot \lambda_{\mathrm{min}}^2\left(\Sigma_{J_1}\right)}{102400\|\Sigma\|^2_{sp}\cdot \sqrt{n}\log^2(p)}= \frac{\s}{409600 \sqrt{n}\log^2(p)}\;.$$

This proves the claim when $p=\max\left\{3n,\left\lceil{\nicefrac{\sqrt{n}}{\s}}\right\rceil\right\}$. As in the proof of Theorem~\ref{sparsity_thm}, this is sufficient to prove the claim for any larger value of $p$.

\end{proof}

\section{Proofs for Lemmas}

\subsection{Proof of Lemma~\ref{LemmaMax}}
Fix any $J\subset [p]$ with $|J|=n$. We will show that, with probability at least $1-2e^{-2n\log(p)}$, 
$$\left\|X_J^TX(\tb-\bstar)\right\|_2\leq \|\Sigma\|_{sp}\cdot 
16\sqrt{2}\cdot n\log(p)\cdot\sqrt{(\tb-\bstar)^T\Sigma(\tb-\bstar)}\text{ for all }\tb\in\R^p\text{ with }\tb_{\overline{J}}=0\;.
$$
Since there are  ${p\choose n}\leq p^n$ choices for the set $J$, this will be sufficient to prove the lemma.

Reorder the covariates to write $\Sigma=\left(\begin{array}{cc}\Sigma_{JJ} &\Sigma_{J\overline{J}} \\ \Sigma_{\overline{J}J} & \Sigma_{\overline{J}\overline{J}}\\\end{array}\right)$. Choose a Cholesky decomposition  
$$\Sigma=\left(\begin{array}{cc} U&V\\\mathbf{0}&W\\\end{array}\right)^T\left(\begin{array}{cc} U&V\\\mathbf{0}&W\\\end{array}\right)\;.$$
Let $a^{(i)}\sim N(0,\mathbf{I}_p)$. Then  
$\left(\begin{array}{cc} U&V\\\mathbf{0}&W\\\end{array}\right)^Ta^{(i)}\sim N(0,\Sigma)$, and so
$$\left(\begin{array}{cc}X_J&X_{\overline{J}}\\\end{array}\right)\stackrel{\mathcal{D}}{=}\left(\begin{array}{cc} A_JU & A_JV+A_{\overline{J}}W\\\end{array}\right)\;,$$
where the matrix $A$ has entries $A_{ij}=a^{(i)}_j$, and $A_J$ consists of the columns of $A$ indexed by $j\in J$. 
We then have
\begin{align*}
&X_J^TX(\tb-\bstar)
\stackrel{\mathcal{D}}{=}U^TA_J^T\left(A_JU(\tb-\bstar)_J+(A_JV+A_{\overline{J}}W)(\tb-\bstar)_{\overline{J}}\right)\\
&=U^TA_J^T\left(A_JU(\tb-\bstar)_J-(A_JV+A_{\overline{J}}W)\bstar_{\overline{J}}\right)
=U^TA_J^T\left(A_J\left(U(\tb-\bstar)_J-V\bstar_{\overline{J}}\right)-A_{\overline{J}}W\bstar_{\overline{J}}\right)\\
\end{align*}

Below, we will show that
\begin{align}
\label{eq:LemmaMax_1}&\|A_J\|_{sp}\leq \sqrt{16n\log(p)}\text{ with probability at least }1-e^{-2n\log(p)}\;,\\
\label{eq:LemmaMax_2}&\text{and }\left\|A_{\overline{J}}W\bstar_{\overline{J}}\right\|_2\leq \sqrt{16n\log(p)}\cdot \left\|W\bstar_{\overline{J}}\right\|_2\text{ with probability at least }1-e^{-2n\log(p)}\;.
\end{align}

Assuming that these bounds hold. Then for any $\tb\in\R^p$ with $\tb_{\overline{J}}=0$, we have
\begin{align*}
&\left\|U^TA_J^T\left(A_J\left(U(\tb-\bstar)_J-V\bstar_{\overline{J}}\right)-A_{\overline{J}}W\bstar_{\overline{J}}\right)\right\|_2\\
&\leq \|U\|_{sp}\cdot \|A_J\|_{sp}\cdot \left(\|A_J\|_{sp}\cdot \left\|U(\tb-\bstar)_J-V\bstar_{\overline{J}}\right\|_2+ \left\|A_{\overline{J}}W\bstar_{\overline{J}}\right\|_2\right)\\
&\leq \|\Sigma\|_{sp}\cdot \sqrt{16n\log(p)}\cdot \left( \sqrt{16n\log(p)}\cdot \left\|U(\tb-\bstar)_J-V\bstar_{\overline{J}}\right\|_2+\sqrt{16n\log(p)}\cdot  \left\|W\bstar_{\overline{J}}\right\|_2\right)\\
&= \|\Sigma\|_{sp}\cdot{16n\log(p)}\cdot \left(\left\|U(\tb-\bstar)_J-V\bstar_{\overline{J}}\right\|_2+\left\|W\bstar_{\overline{J}}\right\|_2\right)\\
&\leq \|\Sigma\|_{sp}\cdot{16n\log(p)}\cdot \sqrt{2}\cdot \left(\left\|U(\tb-\bstar)_J-V\bstar_{\overline{J}}\right\|^2_2+\left\|W\bstar_{\overline{J}}\right\|^2_2\right)^{\nicefrac{1}{2}}\\
&=\|\Sigma\|_{sp}\cdot{16\sqrt{2}\cdot n\log(p)}\cdot\sqrt{(\tb-\bstar)^T\Sigma(\tb-\bstar)}\;.
\end{align*}

We conclude by proving~(\ref{eq:LemmaMax_1}) and~(\ref{eq:LemmaMax_2}). We first prove~(\ref{eq:LemmaMax_1}) using a  construction from \citet{KMO}.
First, define $\U=\left\{u\in\left(\tfrac{1}{8\sqrt{n}}\Z\right)^n:\|u\|_2\leq 1\right\}$. By Remark 5.1 in \citet{KMO},
$$\left\|A_J\right\|_{sp}\leq \sqrt{2}\sup_{u,v\in \U}\left|u^TA_Jv\right|\;.$$
For any $u,v\in \U$, 
$$u^TA_Jv=\sum_{i=1}^n\sum_{j\in J}u_iv_jA_{ij}\sim N(0,\|u\|^2_2\|v\|^2_2)\;,$$
therefore,
$$Pr\left(\left|u^TA_Jv\right|\geq \sqrt{8 n\log(p)}\right)\leq Pr\left(|N(0,1)|\geq \sqrt{8 n\log(p)}\right)\leq e^{-4n\log(p)}\;.$$

Furthermore, $|\U|\leq \left(2\left\lceil{8\sqrt{n}}\right\rceil+1\right)^n\leq p^n$. 
So,
\begin{align*}
&Pr\left(\left\|A_J\right\|_{sp}\leq  \sqrt{16n\log(p)}\right)\leq Pr\left(\left|u^TA_Jv\right|\leq \sqrt{8\cdot n\log(p)}\text{ for all }u,v\in\U\right)\\
&\geq 1-p^{2n}e^{-4n\log(p)}\geq 1-e^{-2n\log(p)}\;.
\end{align*}

Next we prove~(\ref{eq:LemmaMax_2}). We have
\begin{align*}
&\left\|A_{\overline{J}}W\bstar_{\overline{J}}\right\|^2_2
=\sum_i \left(\sum_{j\in\overline{J}}a^{(i)}_j \left(W\bstar_{\overline{J}}\right)_j\right)^2
\stackrel{\mathcal{D}}{=}\left\|W\bstar_{\overline{J}}\right\|^2_2\cdot \chi^2_n\;.
\end{align*}
By Lemma 1 of \citet{LaurentMassart},
$Pr\left(\chi^2_n\geq 16n\log(p)\right)\leq  e^{-2n\log(p)}$. This is sufficient.

\subsection{Proof of Lemma~\ref{LemmaMin}}

Choose any $J_2\subset J_1$ with $|J_2|=\left\lceil{\nicefrac{\sqrt{n}}{2\s}}\right\rceil$. Observe that
$\left\|\mathrm{Proj}_{\mathbf{1}_{J_1}}^{\perp}(X_{J_1}^Tz)\right\|^2_2\geq \left\|\mathrm{Proj}_{\mathbf{1}_{J_2}}^{\perp}(X_{J_2}^Tz)\right\|^2_2$,
and so it is sufficient to only consider the sets $J_2$ of size $\left\lceil{\nicefrac{\sqrt{n}}{2\s}}\right\rceil$.

Fix any $J_2\subset \left[\left\lceil{\nicefrac{\sqrt{n}}{\sigma}}\right\rceil\right]$ with $|J_2|=\left\lceil{\nicefrac{\sqrt{n}}{2\sigma}}\right\rceil$. Let $P\in\R^{\left\lceil{\nicefrac{\sqrt{n}}{2\sigma}}\right\rceil\times\left\lceil{\nicefrac{\sqrt{n}}{2\sigma}}\right\rceil}$ be the orthogonal projection matrix corresponding to ${Proj}_{\mathbf{1}_{J_2}}^{\perp}(\cdot)$. Write $P\Sigma_{J_2} P=AA^T$ for $A\in\R^{\left\lceil{\nicefrac{\sqrt{n}}{2\sigma}}\right\rceil\times \left(\left\lceil{\nicefrac{\sqrt{n}}{2\sigma}}\right\rceil-1\right)}$. Then $(X_{J_2}^Tz)\sim N(0,\|z\|^2_2\cdot \Sigma_{J_2})$ and so $\mathrm{Proj}_{\mathbf{1}_{J_2}}^{\perp}(X_{J_2}^Tz)\sim N(0,\|z\|^2_2\cdot P\Sigma_{J_2} P)$, and therefore $\mathrm{Proj}_{\mathbf{1}_{J_2}}^{\perp}(X_{J_2}^Tz)\stackrel{\mathcal{D}}{=}\|z\|^2_2\cdot Au$ for $u\sim N\left(0,\mathbf{I}_{\left\lceil{\nicefrac{\sqrt{n}}{2\sigma}}\right\rceil-1}\right)$. By examining the definition of $A$, we see that $u^T(A^TA)u\geq \|u\|^2_2\cdot \lambda^2_{\text{min}}\left(\Sigma_{\left[\left\lceil{\nicefrac{\sqrt{n}}{\s}}\right\rceil\right]}\right)$, therefore,
$$\|\mathrm{Proj}_{\mathbf{1}_{J_2}}^{\perp}(X_{J_2}^Tz)\|^2_2\stackrel{\mathcal{D}}{=}\|z\|^2_2\cdot \|Au\|^2_2\stackrel{\mathcal{D}}{\geq} 0.5n\cdot \lambda^2_{\text{min}}\left(\Sigma_{\left[\left\lceil{\nicefrac{\sqrt{n}}{\s}}\right\rceil\right]}\right)\cdot \chi^2_{\left\lceil{\nicefrac{\sqrt{n}}{2\sigma}}\right\rceil-1}\;.$$
 Furthermore, the number of such sets $J_2$ is bounded by $2^{\left\lceil{\nicefrac{\sqrt{n}}{\s}}\right\rceil}$. By the chi-square tail bounds from \citet{FD_NIPS}, using the assumption that $\nicefrac{\sqrt{n}}{\s}\geq 100$, we have
\begin{align*}
&Pr\left(\chi^2_{\left\lceil{\nicefrac{\sqrt{n}}{2\sigma}}\right\rceil-1}\leq \frac{\sqrt{n}}{100\s}\right)
\leq 
Pr\left(\chi^2_{\left\lceil{\nicefrac{\sqrt{n}}{2\sigma}}\right\rceil-1}\leq 0.02\cdot\tfrac{50}{49}\cdot \left(\left\lceil{\nicefrac{\sqrt{n}}{2\sigma}}\right\rceil-1\right)\right)\\
&\leq \exp\left\{\tfrac{1}{2}\left(\left\lceil{\nicefrac{\sqrt{n}}{2\sigma}}\right\rceil-2\right)\left(1-0.02\cdot\tfrac{50}{49}+\log\left(0.02\cdot\tfrac{50}{49}\right)\right)\right\}
\leq \exp\left\{\tfrac{1}{2}\left(\left\lceil{\nicefrac{\sqrt{n}}{2\sigma}}\right\rceil\cdot \tfrac{48}{50}\right)\left(1-0.02\cdot\tfrac{50}{49}+\log\left(0.02\cdot\tfrac{50}{49}\right)\right)\right\}\\
&\leq e^{-0.7084\left\lceil{\nicefrac{\sqrt{n}}{\sigma}}\right\rceil}\;.
\end{align*}
Therefore,
\begin{align*}
& Pr\left(\exists J_1\subset \left[\left\lceil{\sqrt{n}}\right\rceil\right], |J_1|\geq\tfrac{\sqrt{n}}{2}, \ \|\mathrm{Proj}_{\mathbf{1}_{J_1}}^{\perp}(X_{J_1}^Tz)\|^2_2\leq n\lambda^2_{\text{min}}\left(\Sigma_{\left[\left\lceil{\sqrt{n}}\right\rceil\right]}\right)\cdot \tfrac{\sqrt{n}}{200\s}\right)\\
 &\leq Pr\left(\exists J_2\subset [\left\lceil{\sqrt{n}}\right\rceil], |J_2|=\tfrac{\left\lceil{\sqrt{n}}\right\rceil}{2}, \ \|\mathrm{Proj}_{\mathbf{1}_{J_2}}^{\perp}(X_{J_2}^Tz)\|^2_2\leq n\lambda^2_{\text{min}}\left(\Sigma_{\left[\left\lceil{\sqrt{n}}\right\rceil\right]}\right)\cdot \tfrac{\sqrt{n}}{200\s}\right)\\
 &\leq 2^{\left\lceil{\nicefrac{\sqrt{n}}{\s}}\right\rceil}\cdot Pr\left(\chi^2_{\left\lceil{\nicefrac{\sqrt{n}}{2\sigma}}\right\rceil-1}\leq \tfrac{\sqrt{n}}{100\s}\right)\leq 2^{\left\lceil{\nicefrac{\sqrt{n}}{\s}}\right\rceil}\cdot e^{-0.7084\left\lceil{\nicefrac{\sqrt{n}}{\sigma}}\right\rceil}\leq e^{-0.015\left\lceil{\nicefrac{\sqrt{n}}{\sigma}}\right\rceil}\leq e^{-0.015\s^{-1}\sqrt{n}}\;.
\end{align*}

\bibliography{sparse_vector_rates}

\end{document}